\let\sv@thm\@thm
\def\@thm{\let\indent\relax\sv@thm}
\newtheorem{lemma}{Lemma}               
\newtheorem{corollary}{Corollary}
\newtheorem{theorem}{Theorem}[section]         
\theoremstyle{definition}
\newtheorem{remark}{Remark}[section]
\numberwithin{equation}{section}       
\DeclareMathOperator{\polylog}{Li}
\DeclareMathOperator{\arctanh}{arctanh}
\DeclareMathOperator{\Ein}{Ein}
\DeclareMathOperator{\barnesG}{\textbf{G}}
\def\dashint{\,\ThisStyle{\ensurestackMath{%
  \stackinset{c}{.2\LMpt}{c}{.5\LMpt}{\SavedStyle-}{\SavedStyle\phantom{\int}}}%
  \setbox0=\hbox{$\SavedStyle\int\,$}\kern-\wd0}\int}
\begin{document}

\title[Short title]{On Some Series Involving Harmonic and Skew-Harmonic Numbers}


\author{Vincent Nguyen}

\address{Vincent Nguyen\\
Independent student researcher\\
28000 Marguerite Pkwy, Mission Viejo, CA 92692\\
\email{21nguyenvh@gmail.com}}

\dedicated{Dedicated to Minh}                    

\date{DD.MM.2023}                               

\keywords{harmonic numbers; skew-harmonic numbers; Euler sums; Riemann zeta function; polylogarithm; exponential integral}

\subjclass{11B83, 11M41, 30B99, 33B30, 33B15}


\begin{abstract}
        In this paper, we evaluate in closed form several different series involving the harmonic numbers and skew-harmonic numbers. We consider two classes of series involving these sequences. One class of series involves the product of the $n$th harmonic or skew-harmonic number and a tail. We provide the solution to two open problems concerning these harmonic series with tails from Alina Sîntămărian and Ovidiu Furdui's book Sharpening Mathematical Analysis Skills. The other class of series is the Hardy series, which involves a logarithm and the Euler-Mascheroni constant being subtracted from the $n$th harmonic number.
\end{abstract}

\maketitle



\section{Introduction}
The harmonic numbers are defined as $H_n = 1 + \frac{1}{2} + \ldots + \frac{1}{n}$. Its generalization is defined as $H_n^{(p)} = 1 + \frac{1}{2^p} + \ldots + \frac{1}{n^p}$. The generalized harmonic numbers are useful in evaluating some of our series in closed form. The alternating analogue of the harmonic numbers, skew-harmonic numbers, are defined to be $\overline{H}_n = 1 - \frac{1}{2} + \ldots + \frac{(-1)^{n-1}}{n}$. Leonhard Euler famously derived
\begin{equation*}
    \sum_{n = 1}^\infty \frac{H_n}{n^k} = \left(1 + \frac{k}{2}\right)\zeta(k+1) - \frac{1}{2} \sum_{n = 1}^{k-2} \zeta(k-n)\zeta(n + 1) \quad k \in \{2, 3, \ldots\},
\end{equation*}
where $\zeta(s) = \sum_{n = 1}^\infty n^{-s}$ is the Riemann zeta function (see \cite[p. 807]{AS}, \cite[25.2]{DLMF}). The result can be found in \cite{BorweinEuler} and proved in \cite[pp. 47-49]{NielsEuler}. Other formulae for Euler sums have been derived, one of which we will use from ~\cite[Theorem 7.2, p. 33]{EulerSumContour}:
\begin{align}
\label{AltGenEulerSum}
    \begin{split}
        \sum_{n=1}^\infty (-1)^{n-1} \frac{H_n^{(p)}}{n^q}
        =& \frac{(1-(-1)^p)\zeta(p) \eta(q) + \eta(p+q)}{2} \\ 
        &+ \sum_{j + 2k = p} \binom{q + j -1}{q - 1} (-1)^{j+1}\eta(q + j) \eta(2k)\\
        &+ (-1)^p \sum_{i + 2k = q} \binom{p + i -1}{p - 1}\zeta(p + i)\eta(2k), \quad p+q \text{ is odd}
    \end{split}
\end{align}
where $\eta(s) = \sum_{n = 1}^\infty \frac{(-1)^{n-1}}{n^s}$ denotes the Dirichlet eta function (see \cite[p. 807]{AS}).

Series involving the Riemann zeta function can be found in \cite{MiscSeriesZeta} and \cite{SrivastavaEuler}. Other infinite series evaluated in closed form incorporating the Riemann zeta function and harmonic numbers can be found in \cite{BaileyEuler} and \cite{BorweinEuler}. Special values of the zeta function can be found in \cite[p. 807]{AS}.

In this paper, we will be evaluating in closed form two classes of series involving harmonic and skew-harmonic numbers. One of such is a series with a tail. Sîntămărian and Furdui derive in \cite[Problem 7.90 (a)]{MathAnalysisBook} the following harmonic series with tail of $\zeta(2)$:
\begin{equation*}
    \sum_{n = 1}^\infty H_n\left(\zeta(2) - 1 - \frac{1}{2^2} - \ldots - \frac{1}{n^2} - \frac{1}{n} \right) = -1
\end{equation*}
and pose the following related open problem in \cite[p. 214]{MathAnalysisBook}, which we will calculate in this paper:
\begin{equation*}
    S_1 := \sum_{n = 1}^\infty H_{2n}\left(\zeta(2) - 1 - \frac{1}{2^2} - \ldots - \frac{1}{n^2} - \frac{1}{n} \right).
\end{equation*}
Sîntămărian and Furdui derive in \cite[Problem 3.81 (c)]{MathAnalysisBook} another harmonic series with a tail of $\zeta(2)$:
\begin{align*}
    \sum_{n = 1}^\infty \frac{\overline{H}_n}{n}\left(\zeta(2) - 1 - \frac{1}{2^2} - \ldots - \frac{1}{n^2} \right) 
    =& \frac{53\pi^4}{1440} + \frac{\pi^2 \log^2(2)}{4} - \frac{\log^4(2)}{8}\\
    &- \frac{21 \log(2)}{8} \zeta(3) -3 \polylog_4\left(\frac{1}{2}\right)
\end{align*}
where $\polylog_s(z) = \sum_{n = 1}^\infty \frac{z^n}{n^s}$ denotes the polylogarithm of order $s$ (see ~\cite[p. 189]{PolyLogMisc}, ~\cite[25.12]{DLMF}). Note that $\polylog_s(1) = \zeta(s)$ for $s > 1$. Sîntămărian and Furdui also pose in \cite[p. 101]{MathAnalysisBook} a similar harmonic series as an open problem with a tail of $\zeta(3)$,
\begin{equation*}
    S_2 := \sum_{n = 1}^\infty \frac{\overline{H}_n}{n}\left(\zeta(3) - 1 - \frac{1}{2^3} - \ldots - \frac{1}{n^3} \right),
\end{equation*}
which we will calculate in this paper.

The other class of series we will evaluate in closed form is the Hardy series, which can be thought of as a series involving the Euler-Mascheroni constant $\gamma$ (see ~\cite[pp. 28-34]{FinchConstant}) and the logarithm subtracted from the $n$th harmonic number. Hardy derived the alternating Hardy series (see ~\cite[p. 277]{SeriesBook})
$$\sum_{n = 1}^\infty (-1)^{n}\left(H_n - \log(n) - \gamma\right) = \frac{\gamma - \log(\pi)}{2}.$$
We will provide a generalization of this sum as well as analogues of this generalization in section \ref{HardySeriesSect}.

To evaluate these series, we require Abel's summation formula (see ~\cite[p. 38]{MathAnalysisBook}), which states that
\begin{equation}
    \sum_{k = 1}^\infty a_k b_k = \lim_{n \to \infty }A_n b_{n+1} + \sum_{k = 1}^\infty A_k\left(b_k - b_{k+1}\right) \label{AbelSum}
\end{equation}
if $\left(a_n\right)_{\geq 1}$ and $\left(b_n\right)_{\geq 1}$ are two sequences of real numbers and $A_n := \sum_{k = 1}^n a_n$.

\section{Harmonic Series with a Tail}
We will now evaluate some harmonic series with tails. In order to do so, we will make use of the integral representations of the harmonic and skew-harmonic numbers. Due to Euler (see ~\cite{SandiferEuler}), we have
\begin{equation} \label{HarmonicInt}
    H_n = \int_0^1 \frac{1 - x^n}{1-x}\,dx, \quad n \in \{1, 2, \ldots\}.
\end{equation}

\noindent
Similarly, we also have the following integral representation for the skew-harmonic numbers (see ~\cite[Problem 3.66, p. 95]{MathAnalysisBook}):
\begin{equation} \label{SkewHarmonicInt}
    \overline{H}_n = \int_0^1 \frac{1 - (-x)^n}{1+x}\,dx, \quad n \in \{1, 2, \ldots\}.
\end{equation}

\subsection{The Calculation of \texorpdfstring{$S_1$}{S1}}
\noindent
To evaluate $S_1$, we will establish some lemmas.

\begin{lemma} \label{AtanhIntLemma}
    The following equality holds:
    $$\int_0^1 \frac{\arctanh(x) \log\left(1-x^2\right)}{x}\,dx = -\frac{7}{8}\zeta(3)$$
    where $\arctanh(x) = \frac{1}{2}\log\left(\frac{1+x}{1-x}\right)$ is the inverse hyperbolic tangent function for $x < 1$.
\end{lemma}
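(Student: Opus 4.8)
The plan is to reduce the single integral to two standard logarithmic integrals by exploiting the algebraic structure of the integrand. Writing $\arctanh(x) = \tfrac{1}{2}\bigl[\log(1+x) - \log(1-x)\bigr]$ and $\log(1-x^2) = \log(1+x) + \log(1-x)$, the product becomes a difference of squares, so
$$\frac{\arctanh(x)\,\log(1-x^2)}{x} = \frac{\log^2(1+x) - \log^2(1-x)}{2x}.$$
Hence the claim reduces to evaluating $I_- = \int_0^1 \frac{\log^2(1-x)}{x}\,dx$ and $I_+ = \int_0^1 \frac{\log^2(1+x)}{x}\,dx$ and showing $\tfrac{1}{2}(I_+ - I_-) = -\tfrac{7}{8}\zeta(3)$.

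The integral $I_-$ is immediate. The substitution $u = 1-x$ gives $I_- = \int_0^1 \frac{\log^2 u}{1-u}\,du$; expanding $\frac{1}{1-u} = \sum_{n \geq 0} u^n$ and using the elementary moment $\int_0^1 u^n \log^2 u\,du = \frac{2}{(n+1)^3}$ yields $I_- = 2\zeta(3)$ after reindexing.

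The integral $I_+$ is the main obstacle, since the $\log(1+x)$ produces an alternating rather than a telescoping series. I would expand $\log^2(1+x)$ as a power series via the Cauchy product of $\log(1+x) = \sum_{k \geq 1} \frac{(-1)^{k-1}}{k} x^k$ with itself; using $\sum_{k=1}^{n-1} \frac{1}{k(n-k)} = \frac{2H_{n-1}}{n}$, one obtains $\log^2(1+x) = 2\sum_{n \geq 1} (-1)^n \frac{H_{n-1}}{n} x^n$ (with $H_0 = 0$). Dividing by $x$ and integrating over $[0,1]$ gives $I_+ = 2\sum_{n \geq 1} \frac{(-1)^n H_{n-1}}{n^2}$, and substituting $H_{n-1} = H_n - \tfrac{1}{n}$ reduces this to the alternating linear Euler sum $\sum_{n \geq 1} \frac{(-1)^{n-1} H_n}{n^2}$ together with $\eta(3) = \tfrac{3}{4}\zeta(3)$.

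Finally, that alternating Euler sum is exactly the case $p = 1$, $q = 2$ of \eqref{AltGenEulerSum} (note $p + q = 3$ is odd), which evaluates it to $\tfrac{5}{8}\zeta(3)$; back-substitution then gives $I_+ = \tfrac{1}{4}\zeta(3)$. Combining the two pieces, $\tfrac{1}{2}(I_+ - I_-) = \tfrac{1}{2}\bigl(\tfrac{1}{4}\zeta(3) - 2\zeta(3)\bigr) = -\tfrac{7}{8}\zeta(3)$, as required. The only care needed is justifying the interchange of summation and integration (by dominated convergence on $[0,1]$, with the endpoint $x = 1$ handled via Abel's theorem since the series for $\log^2(1+x)$ converges there) and confirming that no boundary divergence arises near $x = 0$, which it does not since $\log^2(1 \pm x) = O(x^2)$ as $x \to 0$.
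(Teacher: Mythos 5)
Your proof is correct, and it begins exactly as the paper's does: the difference-of-squares identity $\arctanh(x)\log(1-x^2) = \tfrac{1}{2}\left[\log^2(1+x) - \log^2(1-x)\right]$, reducing the lemma to the two integrals $I_{\pm} = \int_0^1 \log^2(1\pm x)/x\,dx$. The difference lies in how the pieces are evaluated. For $I_- = 2\zeta(3)$ the paper substitutes $x \mapsto 1 - e^{-x}$ and invokes $\int_0^\infty t^{s-1}/(e^t - 1)\,dt = \Gamma(s)\zeta(s)$, while you expand a geometric series and integrate termwise; both are routine. For $I_+ = \tfrac{1}{4}\zeta(3)$, which is where the real content sits, the paper cites the ready-made formula \eqref{GeneralLogPower+} with $q = 2$ and feeds in the special values $\polylog_2\left(\tfrac{1}{2}\right)$ and $\polylog_3\left(\tfrac{1}{2}\right)$, whereas you derive it from scratch via the Cauchy product $\log^2(1+x) = 2\sum_{n \ge 1}(-1)^n H_{n-1}x^n/n$ and the alternating Euler sum $\sum_{n\ge 1}(-1)^{n-1}H_n/n^2 = \tfrac{5}{8}\zeta(3)$. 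Your route is more self-contained relative to this paper: it reuses machinery the paper already states (\eqref{AltGenEulerSum}) instead of an external integral identity, and it avoids the fortuitous-looking cancellation of the $\polylog_k\left(\tfrac{1}{2}\right)$ values; the paper's route is shorter on the page at the cost of outsourcing the work to a reference. One caveat on your final step: applying \eqref{AltGenEulerSum} with $p=1$, $q=2$ produces the divergent product $\zeta(1)\eta(2)$ twice, once in the leading term and once in the $i=0$ term of the last sum; these cancel formally and the surviving terms do give $\tfrac{3}{2}\eta(3) - \tfrac{1}{2}\zeta(3) = \tfrac{5}{8}\zeta(3)$, but you should either note this cancellation explicitly or simply cite the classical value of this Euler sum — the same convention the paper itself tacitly adopts when it applies \eqref{AltGenEulerSum} with $p=1$ in Lemma \ref{Thm2Int1Lemma}.
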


\begin{proof}
    We have
    \begin{align}
        \int_0^1 \frac{\arctanh(x) \log\left(1-x^2\right)}{x}\,dx
        &= \frac{1}{2}\int_0^1 \frac{\log\left(\frac{1+x}{1-x}\right) \log\left(1 - x^2\right)}{x}\,dx \nonumber \\
        &= \frac{1}{2}\int_0^1 \frac{\log^2(1 + x)}{x}\,dx - \frac{1}{2}\int_0^1 \frac{\log^2(1 - x)}{x}\,dx \label{atanhfinalstep}.
    \end{align}

    \noindent
    From \cite[p. 134]{HarmonicSeriesLogIntBook}, we make use of the following identity:
    \begin{align}
        \label{GeneralLogPower+}
        \begin{split}
            \int_0^1 \frac{\log^q(1+x)}{x}\,dx 
            =& \frac{\log^{q+1}(2)}{q+1} +q! \zeta(q+1)\\
            &- q!\sum_{k = 0}^q \frac{\log^{q - k}(2)}{(q - k)!} \polylog_{k + 1}\left(\frac{1}{2}\right), \quad q \in \{1, 2, \ldots\}.
        \end{split}
    \end{align}

    \noindent
    We get that
    \begin{equation} \label{AtanhIntp1}
        \int_0^1 \frac{\log^2(1 + x)}{x}\,dx = \frac{\zeta(3)}{4}
    \end{equation}
    using \eqref{GeneralLogPower+} with $q =2$ and plugging in the following special values of the polylogarithms (see ~\cite[pp. 6, 155]{PolyLogMisc}:
    $$\polylog_1\left(\frac{1}{2}\right) = \log(2), \quad \polylog_2\left(\frac{1}{2}\right) = \frac{\pi^2}{12} - \frac{\log^2(2)}{2},$$
    $$\polylog_3\left(\frac{1}{2}\right) = \frac{7}{8}\zeta (3) + \frac{\log^3(2)}{6}-\frac{\pi^2}{12}\log(2).$$

    \noindent
    To deal with the other integral, we simply integrate by substituting $x \mapsto 1 - e^{-x}$ and make use of the identity (see \cite[p. 189]{Conway})
    $$\int_0^\infty \frac{t^{s-1}}{e^t-1}\,dt = \zeta(s)\Gamma(s)$$
    where $\Gamma(s) = \int_0^\infty t^{s-1} e^{-t}\,dt$ is the classical gamma function (see ~\cite[5.2, 5.4]{DLMF}, ~\cite[pp. 255-257]{AS}) to get that
    \begin{equation} \label{AtanhIntp2}
        \int_0^1 \frac{\log^2(1 - x)}{x}\,dx = 2\zeta(3).
    \end{equation}

    \noindent
    Plugging in \eqref{AtanhIntp1} and \eqref{AtanhIntp2} into \eqref{atanhfinalstep}, we get our desired result.
    
\end{proof}

\noindent
We note that the special value $\polylog_1(1/2) = \log(2)$ is obvious using the power series representation of the polylogarithm.

\begin{lemma} \label{loglemma}
    The following equality holds:
    $$\int_0^1 \frac{\log\left( 1 - x^2\right)}{x}\,dx = -\frac{\pi^2}{12}.$$
\end{lemma}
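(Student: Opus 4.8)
The plan is to reduce this to a single standard dilogarithm value by the substitution $u = x^2$. With $du = 2x\,dx$ we have $dx/x = du/(2u)$, and the limits $x = 0, 1$ map to $u = 0, 1$, so
\begin{equation*}
    \int_0^1 \frac{\log\left(1-x^2\right)}{x}\,dx = \frac{1}{2}\int_0^1 \frac{\log(1-u)}{u}\,du.
\end{equation*}
The remaining integral is a classical one: from the series $\polylog_2(z) = \sum_{n=1}^\infty z^n/n^2$ one checks by differentiation that $\polylog_2(z) = -\int_0^z \frac{\log(1-t)}{t}\,dt$, hence $\int_0^1 \frac{\log(1-u)}{u}\,du = -\polylog_2(1) = -\zeta(2) = -\pi^2/6$. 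Substituting back gives $\tfrac12\cdot(-\pi^2/6) = -\pi^2/12$, as desired.

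An equally short elementary route avoids naming the dilogarithm altogether. I would expand $\log(1-x^2) = -\sum_{n=1}^\infty x^{2n}/n$ for $|x| < 1$, divide by $x$ to obtain $\frac{\log(1-x^2)}{x} = -\sum_{n=1}^\infty x^{2n-1}/n$, and integrate term by term over $[0,1]$. This yields $-\sum_{n=1}^\infty \frac{1}{n}\cdot\frac{1}{2n} = -\frac{1}{2}\sum_{n=1}^\infty \frac{1}{n^2} = -\frac{1}{2}\zeta(2) = -\frac{\pi^2}{12}$, matching the substitution computation.

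The only point requiring care is the justification of the termwise integration (equivalently, the convergence of the improper integral, since $\log(1-x^2)$ blows up as $x \to 1^-$). This is not a genuine obstacle: each summand $x^{2n-1}/n$ is nonnegative on $[0,1]$, so the monotone convergence theorem applied to the partial sums (or Tonelli's theorem) licenses the interchange of summation and integration without further estimates, and the resulting series $\sum 1/(2n^2)$ converges absolutely. I would present the substitution version as the main argument for brevity, since it reduces everything to the single known value $\polylog_2(1) = \zeta(2)$ already used elsewhere in the paper.
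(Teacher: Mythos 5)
Your main argument is exactly the paper's proof: the substitution $u = x^2$ (the paper writes it as $x \mapsto \sqrt{x}$), followed by the identity $\polylog_2(z) = -\int_0^z \frac{\log(1-t)}{t}\,dt$ and the value $\polylog_2(1) = \zeta(2) = \frac{\pi^2}{6}$. Both your substitution route and your alternative termwise-integration route are correct; the former coincides with the paper, so nothing further is needed.
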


\begin{proof}
    Integrate by substituting $x \mapsto \sqrt{x}$. From there, we can make use of the identities $-\int_0^z \frac{\log(1-x)}{x} = \polylog_2(z)$ and $\polylog_2(1) = \zeta(2) = \frac{\pi^2}{6}$ (see \cite[pp. 1, 4]{PolyLogMisc}).
\end{proof}

\noindent
We can now begin evaluating $S_1$ in closed form.

\noindent
\begin{theorem}
    The following equality holds:
    $$\sum_{n = 1}^\infty H_{2n}\left(\zeta(2) - 1 - \frac{1}{2^2} - \ldots - \frac{1}{n^2} - \frac{1}{n} \right) = \log(2) - \frac{7}{8}\zeta(3) - 1.$$
\end{theorem}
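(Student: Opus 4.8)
The plan is to reduce $S_1$ to a single \emph{convergent} integral and then extract both lemma integrals from one integration by parts. Write $r_n := \zeta(2) - 1 - \frac{1}{2^2} - \cdots - \frac1{n^2} - \frac1n = \zeta(2) - H_n^{(2)} - \frac1n$ for the tail factor. The first move is the identity $H_{2n} = H_n + \overline{H}_{2n}$ (immediate from the definitions, since the even-indexed contribution to $\overline{H}_{2n}$ is $-H_n$), which gives $S_1 = \sum_{n\ge1} H_n r_n + \sum_{n\ge1}\overline{H}_{2n} r_n$. The first sum is precisely the evaluated Problem 7.90(a) from \cite{MathAnalysisBook}, equal to $-1$, so the whole problem reduces to computing $T := \sum_{n\ge1}\overline{H}_{2n} r_n$ and proving $T = \log(2) - \frac78\zeta(3)$; then $S_1 = T - 1$.

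To evaluate $T$, I would use the skew-harmonic representation \eqref{SkewHarmonicInt}, noting $(-x)^{2n}=x^{2n}$, so that $\overline{H}_{2n} = \int_0^1\frac{1-x^{2n}}{1+x}\,dx$. The factor $\frac1{1+x}$ is essential: it is nonsingular on $[0,1]$, whereas the analogous representation \eqref{HarmonicInt} of $H_{2n}$ would introduce a nonintegrable $\frac1{1-x}$. After justifying the interchange of summation and integration (the terms satisfy $\overline{H}_{2n}r_n = O(n^{-2}\log n)$), I obtain $T = \int_0^1\frac{g(1)-g(x)}{1+x}\,dx$, where $g(x) := \sum_{n\ge1} r_n x^{2n}$. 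Reversing the order of summation in the tail sums the series in closed form as $g(x) = \frac{x^2\zeta(2)-\polylog_2(x^2)}{1-x^2}+\log(1-x^2)$, with limit $g(1)=1-\zeta(2)$, and a short simplification yields
\[ g(1)-g(x) = \frac{(1-x^2)-\zeta(2)+\polylog_2(x^2)}{1-x^2} - \log(1-x^2). \]

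The crux is the dilogarithmic integral $I := \int_0^1\frac{\polylog_2(x^2)-\zeta(2)}{(1+x)^2(1-x)}\,dx$, which I would attack by parts. The two key observations are that $\frac{d}{dx}\polylog_2(x^2) = -\frac{2\log(1-x^2)}{x}$, and that the rational weight has the elementary antiderivative $\int\frac{dx}{(1+x)^2(1-x)} = \frac12\arctanh(x) - \frac1{2(1+x)}$. Integrating by parts therefore transforms $I$ into $\int_0^1\frac{\arctanh(x)\log(1-x^2)}{x}\,dx$ — exactly Lemma \ref{AtanhIntLemma}, contributing $-\frac78\zeta(3)$ — together with a multiple of $\int_0^1\frac{\log(1-x^2)}{x}\,dx$ from Lemma \ref{loglemma} and a boundary term $\frac{\zeta(2)}{2}$ coming from $x=0$. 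I expect the main delicacy to be the boundary analysis at $x=1$, where $\arctanh(x)$ blows up but is dominated by the vanishing dilogarithm tail $\polylog_2(x^2)-\zeta(2) = O\big((1-x)\log(1-x)\big)$, so that the upper boundary term is zero.

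Finally, I would reassemble the three pieces of $T$: the elementary $\int_0^1\frac{dx}{1+x}=\log(2)$, the integral $I$ above, and $-\int_0^1\frac{\log(1-x^2)}{1+x}\,dx$. I anticipate two clean cancellations: the $\frac{\pi^2}{12}$ supplied by Lemma \ref{loglemma} cancels the $x=0$ boundary term (matching the absence of any $\zeta(2)$ in the final answer), and the stray $\int_0^1\frac{\log(1-x^2)}{1+x}\,dx$ thrown off by the partial-fraction split $\frac1{x(1+x)}=\frac1x-\frac1{1+x}$ cancels the third piece. What survives is $T = \log(2) - \frac78\zeta(3)$, whence $S_1 = \log(2)-\frac78\zeta(3)-1$. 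The one genuinely nontrivial step is the integration by parts that simultaneously activates both lemmas; everything else is bookkeeping, convergence justification, and the two anticipated cancellations.
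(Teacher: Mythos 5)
Your proof is correct, but it takes a genuinely different route from the paper's. The paper inserts the representation \eqref{HarmonicInt} for $H_{2n}$ directly into $S_1$, so it must fight the singular kernel $\frac{1}{1-x}$: its intermediate pieces diverge individually, forcing a cutoff $t \to 1^-$, L'H\^opital's rule, and the dilogarithm functional equation to extract the finite limit. You instead split $H_{2n} = H_n + \overline{H}_{2n}$ (valid, since $\overline{H}_{2n} = H_{2n} - H_n$), dispose of the $H_n$ part by citing the already-solved Problem 7.90(a) of \cite{MathAnalysisBook} (the value $-1$ that the paper quotes in its introduction), and run the remainder $T = \sum_{n \geq 1} \overline{H}_{2n} r_n$ through the skew-harmonic representation \eqref{SkewHarmonicInt}, whose kernel $\frac{1}{1+x}$ is bounded on $[0,1]$; every integral you write down then converges absolutely and no limiting argument is needed. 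I verified the computational core: $v(x) = \frac{1}{2}\arctanh(x) - \frac{1}{2(1+x)}$ is indeed an antiderivative of $\frac{1}{(1+x)^2(1-x)}$ (partial fractions give $\frac{1/4}{1+x} + \frac{1/2}{(1+x)^2} + \frac{1/4}{1-x}$); the boundary term at $x=1$ vanishes because $\polylog_2(x^2) - \zeta(2) = O\left((1-x)\log(1-x)\right)$ beats the logarithmic blow-up of $\arctanh$; and the two advertised cancellations do occur (the $x=0$ boundary term $-\zeta(2)/2$ against the $+\pi^2/12$ from Lemma \ref{loglemma}, and the stray $\int_0^1 \frac{\log(1-x^2)}{1+x}\,dx$ from the partial-fraction split against your third piece), leaving $T = \log(2) - \frac{7}{8}\zeta(3)$ and hence $S_1 = T - 1$ as claimed. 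Both proofs funnel through Lemmas \ref{AtanhIntLemma} and \ref{loglemma}, so the hard integrals are shared; what your route buys is the elimination of all limit and divergence bookkeeping, at the price of importing the book's evaluation of the $H_n$ sum, which the paper's self-contained argument does not need. One touch-up: your parenthetical justification of the sum--integral interchange (term size $O(n^{-2}\log n)$) is not by itself sufficient; but since $r_n = \zeta(2) - H_n^{(2)} - \frac{1}{n} \leq 0$ for every $n$ (the paper's footnote), the summand has fixed sign and Tonelli's theorem applies verbatim, exactly as in the paper.
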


\begin{proof}
    We begin by plugging \eqref{HarmonicInt} into $S_1$:
    \begin{align*}
        S_1 &= \sum_{n=1}^\infty \int_0^1 \frac{1-x^{2n}}{1-x}\left(\zeta(2) - 1 - \frac{1}{2^2} - \ldots - \frac{1}{n^2} - \frac{1}{n}\right)\,dx\\
        &= -\sum_{n=1}^\infty \int_0^1 \frac{1 - x^{2n}}{1-x}\left(\frac{1}{n} +  H_n^{(2)} - \zeta(2)\right)\,dx
    \end{align*}

    \noindent
    Since $\frac{1-x^{2n}}{1-x} \geq 0$ and $\frac{1}{n} + H_n^{(2)} - \zeta(2) \geq 0$\footnote{To verify this, let $c_n := \frac{1}{n} + H_n^{(2)} - \zeta(2)$. Notice that $c_{n+1} - c_n = -\frac{1}{n(n+1)^2} < 0$, $c_1 = 2 - \zeta(2) > 0$ and $\lim_{n \to \infty} c_n = 0$. Thus, $c_n \geq 0$ for integers $n \geq 1$.}
    for integers $n \geq 1$ and $x \in [0,1)$, we may apply Tonelli's theorem for non-negative functions (see ~\cite[Theorem 23.17, p. 558]{MeasureTheoryBook}) and interchange the order of summation and integration. Making use of the identity (see ~\cite[Problem 2.47, p. 39]{MathAnalysisBook})
    $$\sum_{n = 1}^\infty \left(\zeta(2) - 1 - \frac{1}{2^2} - \ldots - \frac{1}{n^2} - \frac{1}{n+k}\right) = H_{k+1} + \frac{k}{k+1} - \zeta(2), \quad k \in\{0, 1, 2,\ldots\}$$
    with $k = 0$, the generating function from \cite[p. 65]{MathAnalysisBook}
    \begin{equation*} 
    \sum_{n = 1}^\infty \left(\zeta(2) - 1 - \frac{1}{2^2} - \ldots - \frac{1}{n^2} \right)x^n = \dfrac{x \zeta(2) - \polylog_2(x)}{1 - x}, \quad x \in [-1,1)
    \end{equation*}
    and the known power series for the function $-\log(1-x)$, we get
    \begin{align*}
        S_1
        = &\int_0^1 \frac{1}{1 - x } \left(1 - \frac{\zeta(2) - \polylog_2\left(x^2\right)}{1 - x^2} - \log(1-x) - \log(1+x)\right)\,dx\\
        \begin{split}
            = &\lim_{t \to 1^-} \Bigg[\frac{\log^2(1-t)}{2} -\log(1-t)  - \frac{\zeta(2)}{2}\left( \arctanh(t) + \frac{t}{1-t} \right)\\
            &+ \int_0^t \frac{\polylog_2\left(x^2\right)}{(1-x)^2(1+x)}\,dx - \int_0^t \frac{\log(1+x)}{1-x}\,dx\Bigg].
        \end{split}\stepcounter{equation}\tag{\theequation}\label{Thm1Step}
    \end{align*}

    \noindent
    We now have two integrals
    \begin{equation} \label{logdilogstep}
        I_1 := \int_0^t \frac{\polylog_2\left(x^2\right)}{(1-x)^2(1+x)}\,dx - \int_0^t \frac{\log(1+x)}{1-x}\,dx.
    \end{equation}

    \noindent
    We can apply apply integration by parts with the derivative of the dilogarithm (see ~\cite[p. 1]{PolyLogMisc}):
    \begin{align*}
        \int_0^t \frac{\polylog_2\left(x^2\right)}{(1-x)^2(1+x)}\,dx
        =& \frac{1}{2}\int_0^t \frac{\polylog_2 \left(x^2\right)}{(1-x)^2}\,dx + \frac{1}{2}\int_0^t \frac{\polylog_2\left(x^2\right)}{1 - x^2}\,dx\\
        \stepcounter{equation}\tag{\theequation}\label{dilogstep1}
        \begin{split}
            =& \frac{\polylog_2\left(t^2\right)}{2(1 - t)} + \int_0^t \left(\frac{\log\left(1 - x^2\right)}{x} + \frac{\log\left(1 - x^2\right)}{1 - x}\right)\,dx \\
            &+ \frac{\arctanh(t)\polylog_2\left(t^2\right)}{2} +\int_0^t \frac{\arctanh(x) \log\left(1 - x^2\right)}{x}\,dx.
        \end{split}
    \end{align*}

    \noindent
    Substituting \eqref{dilogstep1} into \eqref{logdilogstep}, we get
    \begin{align*}
        I_1
        =& \frac{\polylog_2\left(t^2\right)}{2(1 - t)} + \int_0^t \frac{\log\left(1 - x^2\right)}{x}\,dx + \int_0^t \frac{\log(1-x)}{1 - x}\,dx \\
        &+ \frac{\arctanh(t)\polylog_2\left(t^2\right)}{2} +\int_0^t \frac{\arctanh(x) \log\left(1 - x^2\right)}{x}\,dx\\
        \stepcounter{equation}\tag{\theequation}\label{dilogstep2}
        \begin{split}
            =& \frac{\polylog_2\left(t^2\right)}{2(1 - t)} - \frac{\log^2(1 -t)}{2} + \int_0^t \frac{\log\left(1 - x^2\right)}{x}\,dx + \frac{\arctanh(t)\polylog_2\left(t^2\right)}{2}\\
            & +\int_0^t \frac{\arctanh(x) \log\left(1 - x^2\right)}{x}\,dx.
        \end{split}
    \end{align*}

    \noindent
    Plug \eqref{dilogstep2} into \eqref{Thm1Step}:
    \begin{align*}
        S_1
        = & \lim_{t \to 1^-} \Bigg[\frac{\arctanh(t)}{2}\left(\polylog_2\left(t^2\right) - \zeta(2)\right)-\log(1-t) + \frac{\polylog_2\left(t^2\right) -t \zeta(2) }{2(1-t)}\\
        & + \int_0^t \frac{\log\left(1 - x^2\right)}{x}\,dx +\int_0^t \frac{\arctanh(x) \log\left(1 - x^2\right)}{x}\,dx\Bigg].
    \end{align*}

    \noindent
    By Lemmas \ref{AtanhIntLemma} and \ref{loglemma}, we have
    \begin{align}
        \label{finalsubThm1}
        \begin{split}
            S_1 = & - \frac{\pi^2}{12} - \frac{7}{8}\zeta(3) + \lim_{t \to 1^-} \frac{\arctanh(t)}{2}\left(\polylog_2\left(t^2\right) - \zeta(2)\right)\\
            &+ \lim_{t \to 1^-} \frac{\polylog_2\left(t^2\right) -t \zeta(2) -2(1-t)\log(1-t)}{2(1-t)}.
        \end{split}
    \end{align}

    \noindent
    By applying L'Hôpital's rule and making use of the fact that $\zeta(2) = \frac{\pi^2}{6}$ (see ~\cite[p. 807]{AS}), we obtain
    \begin{equation} \label{limit1Thm1}
        \lim_{t \to 1^-} \frac{\polylog_2\left(t^2\right) -t \zeta(2) -2(1-t)\log(1-t)}{2(1-t)} = \frac{\pi^2}{12} - 1 + \log (2).
    \end{equation}

    \noindent
    It is known that the dilogarithm satisfies the functional equation (see ~\cite[p. 5]{PolyLogMisc}):
    $$\frac{\pi^2}{6} - \polylog_2(x) = \polylog_2(1-x) + \log(x) \log(1-x).$$
    Using this identity and the special value $\polylog_2(1) = \zeta(2) = \frac{\pi^2}{6}$, we get that
    \begin{equation} \label{limit2Thm1}
        \lim_{t \to 1^-} \frac{\arctanh(t)}{2}\left(\polylog_2\left(t^2\right) - \zeta(2)\right) = 0.
    \end{equation}
    
    \noindent
    Plugging \eqref{limit1Thm1} and \eqref{limit2Thm1} into \eqref{finalsubThm1} and simplifying gives us the desired result.
\end{proof}

\subsection{The Calculation of \texorpdfstring{$S_2$}{S2}}
In order to evaluate in closed form $S_2$, we establish some lemmas.

\begin{lemma} \label{Thm2Int1Lemma}
    The following equality holds:
    $$\int_0^1 \frac{\polylog_4(-x)}{1 + x}\,dx = \frac{17}{16}\zeta (5) -\frac{3}{8}\zeta(2)\zeta(3) -\frac{7}{8}\log(2)\zeta(4).$$
\end{lemma}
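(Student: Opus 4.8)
The plan is to avoid attacking this integral head-on and instead reduce it to a single alternating Euler sum that formula \eqref{AltGenEulerSum} can evaluate. First I would expand $\polylog_4(-x) = \sum_{n=1}^\infty \frac{(-x)^n}{n^4}$, which converges uniformly on $[0,1]$, and integrate term by term against $\frac{1}{1+x}$, giving $\int_0^1 \frac{\polylog_4(-x)}{1+x}\,dx = \sum_{n=1}^\infty \frac{(-1)^n}{n^4}\int_0^1 \frac{x^n}{1+x}\,dx$. The inner integral is supplied by the skew-harmonic representation \eqref{SkewHarmonicInt}: splitting $\overline{H}_n = \int_0^1 \frac{1-(-x)^n}{1+x}\,dx = \log(2) - (-1)^n\int_0^1 \frac{x^n}{1+x}\,dx$ yields $\int_0^1 \frac{x^n}{1+x}\,dx = (-1)^n(\log(2) - \overline{H}_n)$. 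Substituting this and cancelling the sign factors collapses the whole integral to $\log(2)\zeta(4) - \sum_{n=1}^\infty \frac{\overline{H}_n}{n^4}$.

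Next I would evaluate $\sum_{n=1}^\infty \frac{\overline{H}_n}{n^4}$ by interchanging the order of summation (justified by absolute convergence): writing $\overline{H}_n = \sum_{k=1}^n \frac{(-1)^{k-1}}{k}$ and summing over $n \ge k$ first gives $\sum_{n=1}^\infty \frac{\overline{H}_n}{n^4} = \sum_{k=1}^\infty \frac{(-1)^{k-1}}{k}\bigl(\zeta(4) - H_{k-1}^{(4)}\bigr)$. Replacing $H_{k-1}^{(4)} = H_k^{(4)} - k^{-4}$ and recognizing the resulting tails as $\zeta(4)\log(2)$ and $\eta(5)$ reduces everything to the single sum $\sum_{k=1}^\infty \frac{(-1)^{k-1}H_k^{(4)}}{k}$. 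Collecting the pieces, the $\log(2)\zeta(4)$ terms cancel and I obtain the clean identity $\int_0^1 \frac{\polylog_4(-x)}{1+x}\,dx = \sum_{k=1}^\infty \frac{(-1)^{k-1}H_k^{(4)}}{k} - \eta(5)$.

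Finally I would apply \eqref{AltGenEulerSum} with $p=4$ and $q=1$ (so $p+q=5$ is odd). Since $p$ is even the leading $(1-(-1)^p)\zeta(p)\eta(q)$ term vanishes, and the remaining finite sums evaluate, after using $\eta(0)=\tfrac12$, $\eta(1)=\log(2)$, $\eta(2)=\tfrac12\zeta(2)$, $\eta(3)=\tfrac34\zeta(3)$ and $\eta(4)=\tfrac78\zeta(4)$, to $\sum_{k=1}^\infty \frac{(-1)^{k-1}H_k^{(4)}}{k} = 2\zeta(5) - \tfrac38\zeta(2)\zeta(3) - \tfrac78\zeta(4)\log(2)$. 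Subtracting $\eta(5) = \tfrac{15}{16}\zeta(5)$ and using $2 - \tfrac{15}{16} = \tfrac{17}{16}$ then yields the stated closed form.

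The main obstacle is not any single computation but making the reduction legitimate and the Euler-sum bookkeeping exact: I must justify the term-by-term integration and the summation interchange, and I must apply \eqref{AltGenEulerSum} with the correct index ranges, including the boundary contributions coming from $\eta(0)=\tfrac12$, since an error in the binomial weights or in which $(j,k)$ pairs are admitted would corrupt the rational coefficients of $\zeta(5)$ and $\zeta(2)\zeta(3)$. An equivalent but less economical route is to integrate by parts once with $dv = dx/(1+x)$, which peels off the boundary term $-\tfrac78\log(2)\zeta(4)$ immediately and leaves $-\int_0^1 \frac{\log(1+x)\polylog_3(-x)}{x}\,dx$; that integral also reduces to weight-$5$ alternating Euler sums, but it requires two of them rather than one, so I prefer the series route above.
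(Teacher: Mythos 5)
Your proposal is correct, but it follows a genuinely different path from the paper's proof. The paper takes the route you mention only in passing at the end: it integrates by parts to peel off $-\tfrac78\log(2)\zeta(4)$, then expands the leftover integral $\int_0^1 \frac{\log(1+x)\polylog_3(-x)}{x}\,dx$ via the Cauchy product of $-\log(1-x)\polylog_3(x)$ (taken at $-x$), which forces three separate applications of \eqref{AltGenEulerSum}, at $(p,q)=(1,4)$, $(2,3)$ and $(3,2)$, together with $\polylog_5(-1)=-\eta(5)$. You instead expand $\polylog_4(-x)$ termwise, recover $\int_0^1\frac{x^n}{1+x}\,dx=(-1)^n\bigl(\log(2)-\overline{H}_n\bigr)$ from \eqref{SkewHarmonicInt}, and after one absolutely convergent interchange reduce everything to the single sum $\sum_{n\ge1}(-1)^{n-1}H_n^{(4)}/n$, handled by one application of \eqref{AltGenEulerSum} at $(p,q)=(4,1)$. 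I checked your bookkeeping: with $j,k,i\ge0$ and $\eta(0)=\tfrac12$, the middle sum contributes $-\tfrac12\eta(5)-\eta(2)\eta(3)-\eta(1)\eta(4)$, the last sum contributes $2\zeta(5)$, the leading term contributes $\tfrac12\eta(5)$, and the total is $2\zeta(5)-\tfrac38\zeta(2)\zeta(3)-\tfrac78\log(2)\zeta(4)$, the correct value of that Euler sum; subtracting $\eta(5)=\tfrac{15}{16}\zeta(5)$ indeed yields the lemma. Your route buys economy: one Euler sum instead of three, no Cauchy-product identity (with its external references), and no appearance of $\zeta(1)$ --- the paper's application at $(p,q)=(1,4)$ tacitly relies on the Flajolet--Salvy convention $\zeta(1)\mapsto 0$, whereas in your case $p=4$ even annihilates the $\zeta(p)\eta(q)$ term. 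The price is exactly what you flagged: you use \eqref{AltGenEulerSum} at the boundary case $q=1$, where the left-hand side converges only conditionally and the conventions $\eta(0)=\tfrac12$, $\eta(1)=\log(2)$ and the index ranges must be enforced exactly; since your reduction and your closed form agree with the value the paper obtains by an independent decomposition, that application is sound.
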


\begin{proof}
    Begin by integrating by parts:
    \begin{align*}
        \int_0^1 \frac{\polylog_4(-x)}{1+x}\,dx
        &= \log(2) \polylog_4(-1) - \int_0^1 \frac{\log(1 + x) \polylog_3(-x)}{x}\,dx\\
        &= -\frac{7}{8}\log(2)\zeta(4) - \int_0^1 \frac{\log(1 + x) \polylog_3(-x)}{x}\,dx.
    \end{align*}
    We differentiated the polylogarithm using the identity $\polylog_n(z) = \int_0^z \frac{\polylog_{n-1}(t)}{t}\,dt$ (see \cite[p. 189]{PolyLogMisc}). We also used the identities $\polylog_n(-1) = -\eta(n)$, which can be verified via the power series in \cite[p. 189]{PolyLogMisc}, and $\eta(s) = \left(1 - 2^{1 - s}\right)\zeta(s)$ (see ~\cite[23.2.19, p. 807]{AS}).
    With the Cauchy product of $- \log(1 - x)\polylog_3(x)$ (see ~\cite[2.4.4, p. 101]{HarmonicSeriesLogIntBook}\footnote{The author of second edition of this book has made some typos in the proof of the Cauchy product.})
    $$- \log(1 - x) \polylog_3(x) = 2 \sum_{n = 1}^\infty \frac{H_n}{n^3} x^n + \sum_{n = 1}^\infty \frac{H_n^{(2)}}{n^2}x^n + \sum_{n = 1}^\infty \frac{H_n^{(3)}}{n}x^n - 4 \polylog_4(x),$$
    we may switch the order of summation and integration \footnote{Abel's Theorem (see ~\cite[Theorem 45, p. 163]{BomanRogersAnalysis}) tells us that the power series converges uniformly on $x \in [0,1]$, thus justifying the interchange of the sum and integral.}. Integrating gives us
    \begin{align*}
    - \frac{7}{8}\log(2)\zeta(4) + 2 \sum_{n = 1}^\infty (-1)^n \frac{H_n}{n^4} + \sum_{n = 1}^\infty (-1)^n \frac{H_n^{(2)}}{n^3} + \sum_{n = 1}^\infty (-1)^n\frac{H_n^{(3)}}{n^2} - 4 \polylog_5(-1).
    \end{align*}

    \noindent
    Integration of the polylogarithm was carried out using \cite[7.2, p. 189]{PolyLogMisc}. We can then evaluate the remaining series using formula \eqref{AltGenEulerSum} with the appropriate $p$ and $q$ values. We once again make use of the polylogarithm's relationship to the Dirichlet eta function in terms of the Riemann zeta function using \cite[23.2.19, p. 807]{AS}. The values of these series are also given in \cite[pp. 310, 311]{AlmostImpossibleBook1}. Simplifying leaves us with the desired result.
    
\end{proof}

\noindent
We made use of the identity $\polylog_s(-1) = -\eta(s)$.

The following lemmas will require the use of the following result due to Vălean (see ~\cite[Lemma 6, p. 4]{ValeanSkewHarmonicPaper}):
\begin{align} \label{ValeanLogInt}
   \begin{split}
       \int_0^1 \frac{\log^2(1 + x) \polylog_2(-x)}{x}\,dx
       = &\frac{2}{15}\log^5(2) -\frac{2}{3}\log^3(2)\zeta(2) +\frac{7}{4}\log^2(2)\zeta (3)\\
       & -\frac{1}{8}\zeta(2)\zeta(3) -\frac{125}{32}\zeta(5) +4 \log(2)\polylog_4\left(\frac{1}{2}\right)\\
       &+  4\polylog_5\left(\frac{1}{2}\right).
   \end{split}
\end{align}

\begin{lemma} \label{Thm2Int2Lemma}
    The following equality holds:
    \begin{align*}
        \int_0^1 \frac{\log(1 + x)\polylog_3(-x)}{1 + x}\,dx
        =& \frac{1}{16}\zeta(2)\zeta (3)+\frac{125}{64}\zeta (5) -\frac{\log^5(2)}{15} +\frac{1}{3}\log^3(2)\zeta(2)\\
        &-\frac{5}{4}\log^2(2)\zeta(3) -2 \log(2) \polylog_4\left(\frac{1}{2}\right) -2 \polylog_5\left(\frac{1}{2}\right).
    \end{align*}
\end{lemma}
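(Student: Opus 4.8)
The plan is to reduce this integral to Vălean's integral \eqref{ValeanLogInt} by a single integration by parts. The guiding observation is that $\frac{\log(1+x)}{1+x} = \frac{d}{dx}\left[\tfrac{1}{2}\log^2(1+x)\right]$, so I would take $u = \polylog_3(-x)$ and $dv = \frac{\log(1+x)}{1+x}\,dx$, which gives $v = \tfrac{1}{2}\log^2(1+x)$ and, via the polylogarithm derivative identity $\frac{d}{dx}\polylog_3(-x) = \frac{\polylog_2(-x)}{x}$, the differential $du = \frac{\polylog_2(-x)}{x}\,dx$.

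Integration by parts then yields
\begin{equation*}
    \int_0^1 \frac{\log(1+x)\polylog_3(-x)}{1+x}\,dx = \left[\tfrac{1}{2}\log^2(1+x)\polylog_3(-x)\right]_0^1 - \frac{1}{2}\int_0^1 \frac{\log^2(1+x)\polylog_2(-x)}{x}\,dx.
\end{equation*}
The boundary term vanishes at $x = 0$, while at $x = 1$ it evaluates to $\tfrac{1}{2}\log^2(2)\polylog_3(-1) = -\tfrac{3}{8}\log^2(2)\zeta(3)$, using the special value $\polylog_3(-1) = -\eta(3) = -\tfrac{3}{4}\zeta(3)$ together with $\eta(s) = (1 - 2^{1-s})\zeta(s)$, both of which are already invoked in the proof of Lemma \ref{Thm2Int1Lemma}.

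The key step is now simply to recognize that the surviving integral $\int_0^1 \frac{\log^2(1+x)\polylog_2(-x)}{x}\,dx$ is precisely Vălean's integral recorded in \eqref{ValeanLogInt}. Substituting that closed form, multiplying through by $-\tfrac{1}{2}$, and combining the two $\log^2(2)\zeta(3)$ contributions --- the $-\tfrac{7}{8}$ coming from Vălean's integral after the $-\tfrac12$ prefactor is applied, and the $-\tfrac{3}{8}$ coming from the boundary term, which sum to $-\tfrac{5}{4}$ --- produces the claimed expression term by term, with the $\tfrac{1}{16}\zeta(2)\zeta(3)$, $\tfrac{125}{64}\zeta(5)$, $-\tfrac{1}{15}\log^5(2)$, $\tfrac{1}{3}\log^3(2)\zeta(2)$, $-2\log(2)\polylog_4(1/2)$, and $-2\polylog_5(1/2)$ terms each arising directly from halving and sign-flipping the corresponding entries of \eqref{ValeanLogInt}.

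Since every manipulation here is elementary, there is no genuine analytic obstacle once the integration by parts is set up: the entire computational weight of the lemma has been outsourced to \eqref{ValeanLogInt}. The only point requiring mild care is bookkeeping of the $\log^2(2)\zeta(3)$ coefficient, where the boundary contribution and the Vălean contribution must be added correctly; this is the step most prone to a sign or fraction slip, but it is otherwise routine.
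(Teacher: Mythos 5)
Your proposal is correct and is essentially identical to the paper's own proof: the same integration by parts with $v = \tfrac{1}{2}\log^2(1+x)$, the same boundary evaluation $\tfrac{1}{2}\log^2(2)\polylog_3(-1) = -\tfrac{3}{8}\log^2(2)\zeta(3)$, and the same reduction to V\u{a}lean's integral \eqref{ValeanLogInt}. Your coefficient bookkeeping (in particular $-\tfrac{7}{8} - \tfrac{3}{8} = -\tfrac{5}{4}$ for the $\log^2(2)\zeta(3)$ term) checks out exactly.
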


\begin{proof}
    We begin by integrating by parts:
    \begin{align*}
        \int_0^1 \frac{\log(1 + x)\polylog_3(-x)}{1 + x}\,dx
        &= \frac{\log^2(2) \polylog_3(-1)}{2} - \frac{1}{2}\int_0^1 \frac{\log^2(1+x) \polylog_2(-x)}{x} \,dx\\
        &= - \frac{3}{8}\log^2(2) \zeta(3) - \frac{1}{2}\int_0^1 \frac{\log^2(1+x) \polylog_2(-x)}{x}\,dx.
    \end{align*}
    We immediately obtain the desired result with \eqref{ValeanLogInt}.
\end{proof}

\begin{lemma} \label{Thm2Int3Lemma}
    The following equality holds:
    \begin{align*}
        \int_0^1 \frac{\polylog_2^2(-x)}{1 + x}\,dx 
        =& \frac{4}{15}\log^5(2) -\frac{4}{3}\log^3(2)\zeta(2) +\frac{7}{2} \log^2(2) \zeta(3) +\frac{5}{8}\log(2)\zeta(4)\\
        &-\frac{125}{16}\zeta(5) -\frac{1}{4}\zeta(2)\zeta(3) + 8\log(2)\polylog_4\left(\frac{1}{2}\right) + 8 \polylog_5\left(\frac{1}{2}\right).
    \end{align*}
\end{lemma}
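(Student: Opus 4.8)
The plan is to prove this by a single integration by parts, mirroring the strategy used in Lemmas \ref{Thm2Int1Lemma} and \ref{Thm2Int2Lemma}, so that the resulting integral is precisely (a multiple of) Vălean's integral \eqref{ValeanLogInt}. Concretely, I would set $u = \polylog_2^2(-x)$ and $dv = \frac{dx}{1+x}$, so that $v = \log(1+x)$. The key computation is the derivative of the dilogarithm at $-x$: since $\polylog_2(z) = \int_0^z \frac{\polylog_1(t)}{t}\,dt$ with $\polylog_1(t) = -\log(1-t)$, the chain rule gives $\frac{d}{dx}\polylog_2(-x) = -\frac{\log(1+x)}{x}$, and therefore $du = -\frac{2\log(1+x)\polylog_2(-x)}{x}\,dx$.

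Carrying out the integration by parts then yields
\begin{align*}
    \int_0^1 \frac{\polylog_2^2(-x)}{1+x}\,dx
    = \Big[\log(1+x)\polylog_2^2(-x)\Big]_0^1 + 2\int_0^1 \frac{\log^2(1+x)\polylog_2(-x)}{x}\,dx.
\end{align*}
The boundary term at $x = 0$ vanishes because $\log(1) = 0$, while at $x = 1$ I would use $\polylog_2(-1) = -\eta(2) = -\tfrac{1}{2}\zeta(2)$, so that $\polylog_2^2(-1) = \tfrac{1}{4}\zeta(2)^2$. Simplifying $\zeta(2)^2 = \tfrac{5}{2}\zeta(4)$ (both equal $\tfrac{\pi^4}{36}$ up to the usual normalization) gives the boundary contribution $\tfrac{5}{8}\log(2)\zeta(4)$, which accounts for the $\tfrac{5}{8}\log(2)\zeta(4)$ term appearing in the claimed closed form.

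It remains only to identify the surviving integral with \eqref{ValeanLogInt}: the integral $\int_0^1 \frac{\log^2(1+x)\polylog_2(-x)}{x}\,dx$ is exactly Vălean's integral, so twice it contributes $\tfrac{4}{15}\log^5(2) - \tfrac{4}{3}\log^3(2)\zeta(2) + \tfrac{7}{2}\log^2(2)\zeta(3) - \tfrac{1}{4}\zeta(2)\zeta(3) - \tfrac{125}{16}\zeta(5) + 8\log(2)\polylog_4(\tfrac{1}{2}) + 8\polylog_5(\tfrac{1}{2})$. Adding the boundary term $\tfrac{5}{8}\log(2)\zeta(4)$ reproduces the stated result verbatim. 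I do not expect a genuine obstacle here, since the entire difficulty has been packaged into \eqref{ValeanLogInt}; the only points requiring care are getting the sign of $\frac{d}{dx}\polylog_2(-x)$ right (which fixes the sign of the remaining integral) and expressing $\zeta(2)^2$ as $\tfrac{5}{2}\zeta(4)$ so that the boundary term appears in the form advertised in the statement.
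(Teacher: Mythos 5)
Your proposal is correct and follows essentially the same route as the paper: a single integration by parts with $v = \log(1+x)$, the boundary evaluation via $\polylog_2(-1) = -\eta(2) = -\tfrac12\zeta(2)$ together with $\zeta^2(2) = \tfrac52\zeta(4)$, and the reduction of the remaining integral to twice Vălean's result \eqref{ValeanLogInt}. All signs and constants in your computation check out, so there is nothing to add.
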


\begin{proof}
    We begin by integrating by parts:
    \begin{align*}
        \int_0^1 \frac{\polylog_2^2(-x)}{1 + x}\,dx
        &= \log(2) \polylog_2^2(-1) + 2\int_0^1 \frac{\log^2(1+x) \polylog_2(-x)}{x}\,dx\\
        &= \log(2) \cdot \frac{\zeta^2(2)}{4} + 2\int_0^1 \frac{\log^2(1+x) \polylog_2(-x)}{x}\,dx.
    \end{align*}
    Making use of the fact that $\zeta^2(2) = \frac{5}{2}\zeta(4)$, since $\zeta(2) = \frac{\pi^2}{6}$ and $\zeta(4) = \frac{\pi^4}{90}$ (see \cite[p. 807]{AS}), and plugging in \eqref{ValeanLogInt}, we get the desired result.
\end{proof}

We can now begin to evaluate $S_2$ in closed form.

\begin{theorem}
    The following equality holds:
    \begin{align*}
        \sum_{n = 1}^\infty \frac{\overline{H}_n}{n}\left(\zeta(3) - 1 - \frac{1}{2^3} - \ldots - \frac{1}{n^3} \right)
        =& \frac{193}{64}\zeta(5) - \frac{5}{16}\zeta(2)\zeta(3) - \frac{\log^5(2)}{15}\\
        &+ \frac{1}{3}\log^3(2)\zeta(2) - \frac{15}{16}\log(2)\zeta(4)\\
        &-2\log(2)\polylog_4\left(\frac{1}{2}\right) - 2\polylog_5\left(\frac{1}{2}\right).
    \end{align*}
\end{theorem}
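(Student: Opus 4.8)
The plan is to mirror the structure of the $S_1$ calculation: replace $\overline{H}_n$ by its integral representation \eqref{SkewHarmonicInt}, interchange summation and integration, and then carry out the resulting single integral with the help of a generating function and the three lemmas above. Writing $T_n := \zeta(3) - H_n^{(3)}$, substitution of \eqref{SkewHarmonicInt} gives
$$S_2 = \sum_{n=1}^\infty \frac{T_n}{n}\int_0^1 \frac{1-(-x)^n}{1+x}\,dx.$$
Since $T_n \geq 0$ and $1-(-x)^n \geq 0$ on $[0,1]$ for every $n \geq 1$ (for even $n$ this is $1-x^n$, for odd $n$ it is $1+x^n$), the summand is non-negative and Tonelli's theorem justifies interchanging the sum and the integral. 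Splitting $1-(-x)^n$ and summing each part against $T_n/n$ produces
$$S_2 = \int_0^1 \frac{g(1)-g(-x)}{1+x}\,dx, \qquad g(z) := \sum_{n=1}^\infty \frac{\zeta(3)-H_n^{(3)}}{n}\,z^n,$$
so the whole problem reduces to finding $g$ in closed form and then integrating over $[0,1]$.

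To obtain $g$, I start from the generating function $\sum_{n\geq 1} H_n^{(3)}z^n = \polylog_3(z)/(1-z)$, divide by $z$, and integrate, using $\tfrac{1}{t(1-t)} = \tfrac1t + \tfrac{1}{1-t}$. The piece $\int_0^z \polylog_3(t)/t\,dt = \polylog_4(z)$ is immediate, while $\int_0^z \polylog_3(t)/(1-t)\,dt$ I integrate by parts and then use the identity $\log(1-t)/t = -\tfrac{d}{dt}\polylog_2(t)$, which collapses the remaining integral to $-\tfrac12\polylog_2^2(z)$. Adding the contribution $\zeta(3)\sum z^n/n = -\zeta(3)\log(1-z)$ yields the closed form
$$g(z) = \bigl(\polylog_3(z)-\zeta(3)\bigr)\log(1-z) - \polylog_4(z) + \tfrac12 \polylog_2^2(z).$$

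With $g$ in hand, two evaluations remain. First, as $z \to 1$ the logarithmic term vanishes, since $\polylog_3(z)-\zeta(3) = O(1-z)$, giving $g(1) = -\zeta(4) + \tfrac12\zeta(2)^2 = \tfrac14\zeta(4)$ after using $\zeta(2)^2 = \tfrac52\zeta(4)$; hence $\int_0^1 g(1)/(1+x)\,dx = \tfrac14\log(2)\zeta(4)$. Second, setting $z=-x$ gives
$$-g(-x) = \zeta(3)\log(1+x) + \polylog_4(-x) - \polylog_3(-x)\log(1+x) - \tfrac12\polylog_2^2(-x),$$
and dividing by $1+x$ and integrating term by term over $[0,1]$ produces the elementary integral $\zeta(3)\int_0^1 \log(1+x)/(1+x)\,dx = \tfrac12\log^2(2)\zeta(3)$ together with the three integrals of Lemmas \ref{Thm2Int1Lemma}, \ref{Thm2Int2Lemma} and \ref{Thm2Int3Lemma}, appearing with coefficients $+1$, $-1$ and $-\tfrac12$ respectively.

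The main (and essentially only remaining) obstacle is the bookkeeping of this final linear combination: substituting the three lemmas together with the two elementary pieces, one checks that the $\log^2(2)\zeta(3)$ contributions cancel and that the coefficients of $\zeta(5)$, $\zeta(2)\zeta(3)$, $\log(2)\zeta(4)$, $\log^5(2)$, $\log^3(2)\zeta(2)$, $\log(2)\polylog_4(\tfrac12)$ and $\polylog_5(\tfrac12)$ collapse to $\tfrac{193}{64}$, $-\tfrac{5}{16}$, $-\tfrac{15}{16}$, $-\tfrac{1}{15}$, $\tfrac13$, $-2$ and $-2$. Since the genuinely difficult evaluation \eqref{ValeanLogInt} has already been absorbed into the lemmas, this last step is purely a matter of collecting like terms and gives the claimed closed form.
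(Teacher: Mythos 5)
Your proposal is correct and follows essentially the same route as the paper's own proof: the integral representation \eqref{SkewHarmonicInt}, Tonelli's theorem for the interchange, the generating function $\sum_{n\geq 1}\bigl(\zeta(3)-H_n^{(3)}\bigr)\frac{z^n}{n}$ (including its value $\tfrac14\zeta(4)$ at $z=1$), and the same decomposition into $\zeta(3)\int_0^1\frac{\log(1+x)}{1+x}\,dx$ plus the three integrals of Lemmas \ref{Thm2Int1Lemma}, \ref{Thm2Int2Lemma}, \ref{Thm2Int3Lemma} with coefficients $+1$, $-1$, $-\tfrac12$, and your final bookkeeping of coefficients checks out. The only difference is cosmetic: you derive the generating function yourself (via $\sum H_n^{(3)}z^n = \polylog_3(z)/(1-z)$ and integration by parts), whereas the paper cites it from Sîntămărian and Furdui, so your argument is slightly more self-contained but otherwise identical.
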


\begin{proof}
    Plugging in the integral representation of the skew-harmonic number \eqref{SkewHarmonicInt}, we have
    \begin{align*}
        S_2
        &= \sum_{n =1}^\infty \frac{1}{n}\int_0^1 \frac{1 - (-x)^n}{1+x}\left(\zeta(3) - 1 - \frac{1}{2^3} - \ldots - \frac{1}{n^3}\right)\,dx.
    \end{align*}

    \noindent
    Since $\frac{1 -(-x)^n}{1+x} \geq 0$ and $\zeta(3) - H_n^{(3)} >0$ for integers $n \geq 1$ and real numbers $0 \leq x \leq 1$, we may use Tonelli's theorem for non-negative functions (see ~\cite[Theorem 23.17, p. 558]{MeasureTheoryBook}) to justify interchanging the order of summation and integration. We make use of the generating function
    \begin{align*}
    &\sum_{n = 1}^\infty \left(\zeta(3) - 1 - \frac{1}{2^3} - \ldots - \frac{1}{n^3}\right) \frac{x^n}{n}\\
    =&
    \begin{cases}
        \log(1-x)\left[\polylog_3(x) - \zeta(3) \right] - \polylog_4(x) + \frac{\polylog_2^2(x)}{2} & \text{if } x \in [-1,1)\\
        \frac{\zeta(4)}{4} & \text{if } x = 1
    \end{cases}
    \end{align*}
    (see \cite[Problem 7.87 (b), p. 212]{MathAnalysisBook}), leaving us with
    \begin{align*}
        S_2
        = & \frac{\zeta(4)}{4}\log(2) - \int_0^1 \frac{\log(1+x)\polylog_3(-x)}{1+x}\,dx +\zeta(3)\int_0^1 \frac{\log(1 + x)}{1 + x}\,dx\\
        &+ \int_0^1 \frac{\polylog_4(-x)}{1+x}\,dx - \frac{1}{2}\int_0^1 \frac{\polylog_2^2(-x)}{1 + x}\,dx.
    \end{align*}
    Using $\int_0^1 \frac{\log(1+x)}{1+x}\,dx = \frac{1}{2}\log^2(2)$, plugging in Lemmas \ref{Thm2Int1Lemma}, \ref{Thm2Int2Lemma} and \ref{Thm2Int3Lemma}, and simplifying gives us the desired result.

\end{proof}

\subsection{Harmonic Series with Tail \texorpdfstring{$e^y$}{exp}}
We will derive a harmonic series with tail $e^y$, similar to the following series (see ~\cite[p. 10]{ExoticSeries}):
$$\sum_{n = 1}^\infty H_n\left(e^y - 1 - \frac{y}{1!} - \frac{y^2}{2!} - \ldots - \frac{y^n}{n!} \right) = e^y\left(\Ein(y) - y + 1\right)-1, \quad y \in \mathbb{R}$$
where $\Ein$ denotes the complementary exponential integral (see ~\cite[6.2.3]{DLMF}, in ~\cite[p. 228]{AS}). We will derive a similar result involving the skew-harmonic numbers.

\begin{theorem} \label{ExpTailTheorem}
    For all $y$, we have
    $$\sum_{n = 1}^\infty \overline{H}_n\left(e^y - 1 - \frac{y}{1!} - \frac{y^2}{2!} - \ldots - \frac{y^n}{n!}\right) = y e^y\left[\Ein(2y) - \Ein(y)\right] - \cosh (y) +1.$$
\end{theorem}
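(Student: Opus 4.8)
The plan is to mirror the integral-representation technique used for $S_1$ and $S_2$, now with the exponential tail. Writing $T_n := e^y - \sum_{k=0}^n \frac{y^k}{k!} = \sum_{k=n+1}^\infty \frac{y^k}{k!}$ for the $n$th tail, I would first substitute the integral representation \eqref{SkewHarmonicInt} for $\overline{H}_n$, giving
$$\sum_{n=1}^\infty \overline{H}_n T_n = \sum_{n=1}^\infty T_n \int_0^1 \frac{1-(-x)^n}{1+x}\,dx.$$
To interchange the sum and integral for every real $y$ (not merely $y\ge 0$, where Tonelli would apply since $T_n\ge 0$), I would invoke Fubini's theorem: the integrand has absolute value $\frac{1-(-x)^n}{1+x}\lvert T_n\rvert$, whose integral over $[0,1]$ is $\overline{H}_n\lvert T_n\rvert \le \lvert T_n\rvert$, and $\sum_n \lvert T_n\rvert < \infty$ because $\lvert T_n\rvert \le \frac{\lvert y\rvert^{n+1}}{(n+1)!}e^{\lvert y\rvert}$.

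Next I would compute the ordinary generating function $g(z):=\sum_{n=1}^\infty T_n z^n$. Swapping the order of the double sum $\sum_{n\ge 1} z^n \sum_{k>n} \frac{y^k}{k!}$ (summing $z^n$ over $1\le n\le k-1$ first) yields the closed form
$$g(z) = \frac{e^{yz} - z e^y + z - 1}{z-1}.$$
After the interchange the target sum becomes $\int_0^1 \frac{g(1)-g(-x)}{1+x}\,dx$, where $g(1)=(y-1)e^y+1$ is obtained as the removable-singularity limit (equivalently summed directly), and $g(-x) = \dfrac{e^{-xy}+xe^y-x-1}{-x-1}$.

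I would then evaluate the remaining integral. Splitting $\frac{g(1)-g(-x)}{1+x}$ into a constant-over-$(1+x)$ part and the part $\frac{e^{-xy}+xe^y-x-1}{(1+x)^2}$, the rational pieces integrate to elementary logarithms and $\frac{1}{(1+x)^2}$ terms, while an integration by parts on $\int_0^1 \frac{e^{-xy}}{(1+x)^2}\,dx$ peels off the genuinely transcendental contribution $\int_0^1 \frac{e^{-xy}}{1+x}\,dx$. Collecting terms, the $\log 2$ coefficients telescope to $ye^y\log 2$ and the constant-exponential terms assemble into $-\tfrac12 e^y - \tfrac12 e^{-y} = -\cosh(y)$.

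The decisive step — and the one I expect to be the main obstacle — is recognizing the exponential integral. The substitution $t=(1+x)y$ gives $\Ein(2y)-\Ein(y)=\int_y^{2y}\frac{1-e^{-t}}{t}\,dt = \int_0^1 \frac{1-e^{-(1+x)y}}{1+x}\,dx$, so that $ye^y[\Ein(2y)-\Ein(y)] = ye^y\log 2 - y\int_0^1 \frac{e^{-xy}}{1+x}\,dx$. Matching this against the $ye^y\log 2$ term and the $-y\int_0^1 \frac{e^{-xy}}{1+x}\,dx$ produced above, together with the $-\cosh(y)+1$ just obtained, would close the proof.
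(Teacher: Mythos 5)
Your proposal is correct and follows essentially the same route as the paper: insert the integral representation \eqref{SkewHarmonicInt}, interchange sum and integral, use the closed form of the tail generating function $\sum_n T_n z^n$, and reduce the resulting integral (by parts, peeling off $y\int_0^1 \frac{e^{-xy}}{1+x}\,dx$) to the expression that the substitution $t=(1+x)y$ identifies with $y e^y\left[\Ein(2y)-\Ein(y)\right]$. The only differences are cosmetic: you justify the interchange via Fubini with the bound $\overline{H}_n \lvert T_n\rvert \le \lvert T_n\rvert$ where the paper uses dominated convergence with the Lagrange error bound, and you derive the generating function directly where the paper cites it.
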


\begin{proof}
    We can easily check that the series evaluates to $0$ for $ y = 0$. For now let us assume $y \neq 0$. We begin by inserting the integral representation of the skew-harmonic numbers \ref{SkewHarmonicInt} and interchange the sum and integral:
    \begin{align*}
        &\sum_{n = 1}^\infty \overline{H}_n\left(e^y - 1 - \frac{y}{1!} - \frac{y^2}{2!} - \ldots - \frac{y^n}{n!}\right)\\
        = &\sum_{n = 1}^\infty \int_0^1 \frac{1 - (-x)^n}{1+x}\left(e^y - 1 - \frac{y}{1!} - \frac{y^2}{2!} - \ldots - \frac{y^n}{n!}\right)\,dx
    \end{align*}
    It is known via the Lagrange error bound that $$\left|e^y - 1 - \frac{y}{1!} - \frac{y^2}{2!} - \ldots - \frac{y^n}{n!} \right| \leq \frac{e^t |y|^{n+1}}{(n+1)!}$$
    for some arbitrary positive $t$ such that $|y| < t$. This inequality was also stated in \cite{ExoticSeries}. Since
    \begin{align*}
        \sum_{n = 0}^\infty \left(e^y - 1 - \frac{y}{1!} - \frac{y^2}{2!} - \ldots - \frac{y^n}{n!} \right)x^n
        = \begin{cases}
        \frac{e^y - e^{xy}}{1-x} & \text{if } x \neq 1\\
        y e^y & \text{if } x = 1
    \end{cases}
    \end{align*}
    (see ~\cite[p. 154]{LimitsSeriesBook}) and
    \begin{align*}
        &\left|\sum_{n=1}^N (1-(-x)^n) \left(e^y - 1 - \frac{y}{1!} - \frac{y^2}{2!} - \ldots - \frac{y^n}{n!}\right)\right|\\
        \leq & \sum_{n=1}^N |1-(-x)^n| \left|e^y - 1 - \frac{y}{1!} - \frac{y^2}{2!} - \ldots - \frac{y^n}{n!}\right|\\
        \leq &2e^t \sum_{n=1}^\infty \frac{|y|^{n+1}}{(n+1)!}\\
        \leq &2e^t(e^{|y|} - 1 - |y|)
    \end{align*}    
    for integers $n \geq 1$, real numbers $x$ such that $0 \leq x \leq 1$, all real numbers $y$ and some arbitrary positive $t$ such that $|y| < t$, we may justify interchanging the sum and integral via Lebesgue's Dominated Convergence Theorem (see \cite[pp. 188-189]{MeasureTheoryBook}). Thus, the series can be rewritten as
    \begin{align*}
        \int_0^1 \left(ye^y - \frac{e^y - e^{-xy}}{1+x}\right)\frac{dx}{1+x}
        =&\int_0^1 \frac{-e^y + y e^y(1+x) + e^{-xy}}{(1+x)^2}\,dx\\
        =& -\frac{e^{y}}{2} + \int_0^1 \frac{y e^y(1+x) + e^{-xy}}{(1+x)^2}\,dx.
    \end{align*}

    \noindent
    Integrating by parts gives us
    \begin{align*}
        -\frac{e^y}{2} + 1 - \frac{e^{-y}}{2} + y \int_0^1 \frac{e^y - e^{-xy}}{1+x}\,dx
        = &1 -\cosh(y) + y \int_0^1 \frac{e^y - e^{-xy}}{1+x}\,dx.
    \end{align*}
    Since $y \neq 0$, we may integrate by substituting $x \mapsto x/y - 1$, which yields
    \begin{align*}
        1 - \cosh (y) +  ye^y\int_y^{2y} \frac{1 - e^{-x}}{x}\,dx
        = & y e^y\left[\Ein(2y) - \Ein(y)\right] - \cosh (y) +1.
    \end{align*}
    We used the fact that $\Ein(z) = \int_0^z \frac{1 - e^{-t}}{t}\,dt$ (see ~\cite[6.2.3]{DLMF}). If we plug in $y = 0$ in the derived expression we also obtain a value of $0$. Thus, the expression is valid for all real $y$.
\end{proof}

\begin{corollary}
    The following equality holds:
    $$\sum_{n = 1}^\infty \frac{\overline{H}_n \{n! e \}}{n!} = e \left[\Ein(2) - \gamma\right] - \cosh(1) - \delta + 1$$
    where $\{x\} = x - \lfloor x \rfloor$ denotes the fractional part of $x$ and $\delta$ is the Euler-Gompertz constant (see ~\cite[p. 423-426]{FinchConstant}).
\end{corollary}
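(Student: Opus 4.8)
The plan is to recognize the left-hand side as the $y=1$ instance of Theorem \ref{ExpTailTheorem}, and then to convert the resulting $\Ein(1)$ term into the constants $\gamma$ and $\delta$. First I would analyze the fractional part. Writing $e = \sum_{k=0}^\infty \frac{1}{k!}$, I split
$$n!\,e = \sum_{k=0}^n \frac{n!}{k!} + \sum_{k=n+1}^\infty \frac{n!}{k!}.$$
The first sum is an integer, and the geometric comparison $\sum_{k=n+1}^\infty \frac{n!}{k!} < \sum_{m=1}^\infty (n+1)^{-m} = \frac{1}{n} \le 1$ (the classical estimate behind the irrationality of $e$) shows that the tail lies strictly in $(0,1)$ for every $n \ge 1$. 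Hence the tail is exactly $\{n!\,e\}$, which gives the key identity $\frac{\{n!\,e\}}{n!} = e - 1 - \frac{1}{1!} - \frac{1}{2!} - \ldots - \frac{1}{n!}$.

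With this identity the series becomes $\sum_{n=1}^\infty \overline{H}_n\left(e - 1 - \frac{1}{1!} - \ldots - \frac{1}{n!}\right)$, which is precisely the left-hand side of Theorem \ref{ExpTailTheorem} evaluated at $y=1$. Applying the theorem immediately yields $e\left[\Ein(2) - \Ein(1)\right] - \cosh(1) + 1$, so the whole content of the corollary reduces to rewriting the single term $\Ein(1)$.

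Finally I must reconcile $\Ein(1)$ with $\gamma$ and $\delta$. Using the series $\Ein(z) = \sum_{k\ge 1}\frac{(-1)^{k-1}z^k}{k\,k!}$ together with the classical expansion $E_1(z) = -\gamma - \log z + \sum_{k\ge 1}\frac{(-1)^{k-1}z^k}{k\,k!}$ of the exponential integral $E_1(z) = \int_z^\infty \frac{e^{-t}}{t}\,dt$, one obtains $\Ein(z) = \gamma + \log z + E_1(z)$, hence $\Ein(1) = \gamma + E_1(1)$. On the other hand, the integral representation of the Euler-Gompertz constant, $\delta = \int_0^\infty \frac{e^{-t}}{1+t}\,dt$, becomes $\delta = e\,E_1(1)$ under the substitution $u = 1+t$, so $E_1(1) = \delta/e$. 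Combining these gives $\Ein(1) = \gamma + \delta/e$, and substituting into $e\left[\Ein(2) - \Ein(1)\right] - \cosh(1) + 1$ produces exactly $e\left[\Ein(2) - \gamma\right] - \cosh(1) - \delta + 1$.

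The fractional-part reduction and the appeal to Theorem \ref{ExpTailTheorem} are routine; the only genuinely delicate step is the constant identity $\Ein(1) = \gamma + \delta/e$, which hinges on correctly pairing the series representation of $\Ein$, the logarithmic expansion of $E_1$, and the integral definition of $\delta$ so that the $\gamma$ and the $\delta/e$ are accounted for separately.
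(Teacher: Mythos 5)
Your proof is correct and takes essentially the same route as the paper: reduce the series via the fractional-part identity $\{n!e\} = n!\bigl(e - 1 - \tfrac{1}{1!} - \ldots - \tfrac{1}{n!}\bigr)$ to Theorem \ref{ExpTailTheorem} at $y=1$, then convert $\Ein(1)$ into $\gamma + \delta/e$. The only difference is one of bookkeeping: you derive the fractional-part identity and the constant identities ($\Ein(1) = \gamma + E_1(1)$ and $\delta = e\,E_1(1)$, equivalent to the paper's $\mathrm{Ei}(-1) = -\Ein(1)+\gamma$ and $-e\,\mathrm{Ei}(-1) = \delta$) from first principles, whereas the paper cites them.
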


\begin{proof}
    Using the identity (see ~\cite{FractionalPartSeries})
    $$\{n! e\} = n!\left(e - 1 - \frac{1}{1!} - \frac{1}{2!} - \ldots - \frac{1}{n!}\right),$$
    we have
    \begin{align*}
        \sum_{n = 1}^\infty \frac{\overline{H}_n \{n! e \}}{n!}
        &= \sum_{n = 1}^\infty\overline{H}_n\left(e - 1 - \frac{1}{1!} - \frac{1}{2!} - \ldots - \frac{1}{n!}\right).
    \end{align*}
    Using Theorem \ref{ExpTailTheorem} with $y=1$, we simplify to get
    \begin{align*}
        e\left[\Ein(2) - \Ein(1)\right] - \cosh(1) + 1
        &= e\left[\Ein(2) + \mathrm{Ei}(-1) - \gamma\right] - \cosh(1) + 1\\
        &= e\left[\Ein(2) - \gamma\right] + e \mathrm{Ei}(-1) - \cosh(1) + 1
    \end{align*}
    where $\mathrm{Ei}(x) = \dashint_{-\infty}^x \frac{e^t}{t}\,dt$ denotes the exponential integral (see ~\cite[6.2.5]{DLMF}). We used the identity $\mathrm{Ei}(\pm x) = -\Ein(\mp x) + \log(x) + \gamma$ (see ~\cite[6.2.7]{DLMF}). Applying $-e \mathrm{Ei}(-1) = \delta$ (see \cite[p. 303, 424]{FinchConstant}) leads us to the desired result.
\end{proof}

\section{Hardy Series}\label{HardySeriesSect}
\begin{theorem} \label{HardyGenThm}
    For $k \in \{1, 2, 3, \ldots \}$ and $x > 0$, we have
    $$\sum_{n=1}^\infty (-1)^n \left(H_n -\log\sqrt[k]{(n+ x - 1)^{\overline{k}}} - \gamma \right) =  \frac{\gamma}{2} + \frac{1}{k}\log\left[\frac{\Gamma\left(\frac{x+k}{2}\right)}{ \Gamma\left(\frac{x}{2} \right)}\right]$$
    where $z^{\overline{k}} = z(z+1) \ldots (z + k -1)$ denotes the Pochhammer symbol (see ~\cite[5.2(iii)]{DLMF}).
\end{theorem}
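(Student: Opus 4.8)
The plan is to apply Abel's summation formula \eqref{AbelSum} and reduce the series to a combination that can be read off from the Weierstrass product of the gamma function.

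First I would simplify the logarithmic term. Since $z^{\overline{k}} = z(z+1)\cdots(z+k-1)$, we have
$$\log\sqrt[k]{(n+x-1)^{\overline{k}}} = \frac{1}{k}\sum_{j=0}^{k-1}\log(n+x-1+j),$$
and the crucial observation is that the ratio of consecutive Pochhammer symbols telescopes,
$$\frac{(n+x-1)^{\overline{k}}}{(n+x)^{\overline{k}}} = \frac{n+x-1}{n+x+k-1}.$$
Writing $b_n := H_n - \frac{1}{k}\log(n+x-1)^{\overline{k}} - \gamma$, the asymptotics $H_n = \log n + \gamma + O(1/n)$ together with $\frac{1}{k}\log(n+x-1)^{\overline{k}} = \log n + O(1/n)$ give $b_n \to 0$ at rate $O(1/n)$.

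Next I would set $a_n = (-1)^n$, so that $A_n = \sum_{j=1}^n (-1)^j = \tfrac{1}{2}\left((-1)^n - 1\right)$, which equals $-1$ for odd $n$ and $0$ for even $n$. In \eqref{AbelSum} the boundary term $\lim_{n\to\infty} A_n b_{n+1}$ vanishes because $A_n$ is bounded and $b_{n+1}\to 0$. Using the telescoping identity,
$$b_n - b_{n+1} = -\frac{1}{n+1} - \frac{1}{k}\log\frac{n+x-1}{n+x+k-1},$$
and since only odd indices contribute to $\sum_{n\ge 1} A_n(b_n - b_{n+1})$, the series collapses (setting $n = 2m+1$) to
$$\sum_{n=1}^\infty (-1)^n b_n = \sum_{m=0}^\infty\left[\frac{1}{2(m+1)} - \frac{1}{k}\log\frac{2m+x+k}{2m+x}\right].$$

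Finally, I would evaluate this last sum using the Weierstrass representation
$$\log\Gamma(z) = -\gamma z - \log z + \sum_{m=1}^\infty\left[\frac{z}{m} - \log\left(1+\frac{z}{m}\right)\right].$$
Writing $a = x/2$ and $b = (x+k)/2$, so that $b-a = k/2$ and $\frac{2m+x+k}{2m+x} = \frac{m+b}{m+a}$, this representation yields
$$\sum_{m=1}^\infty\left[\frac{1}{2m} - \frac{1}{k}\log\frac{m+b}{m+a}\right] = \frac{1}{k}\log\frac{\Gamma(b)}{\Gamma(a)} + \frac{\gamma}{2} + \frac{1}{k}\log\frac{b}{a}.$$
Splitting off the $m=0$ term of the collapsed series and comparing termwise against this, the log-parts cancel for $m\ge 1$, the harmonic remainder telescopes via $\sum_{m\ge 1}\left(\frac{1}{2(m+1)} - \frac{1}{2m}\right) = -\frac{1}{2}$, and the stray $\frac{1}{k}\log\frac{b}{a}$ cancels against the $m=0$ contribution, leaving exactly $\frac{\gamma}{2} + \frac{1}{k}\log\frac{\Gamma((x+k)/2)}{\Gamma(x/2)}$. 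As a sanity check, $k=1$, $x=1$ recovers Hardy's series since $\Gamma(1/2) = \sqrt{\pi}$.

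The main obstacle is the rearrangement: the harmonic and logarithmic parts of the collapsed series each diverge, so they must be kept grouped throughout and convergence justified only for their combination — precisely the structure built into the Weierstrass series, whose summand $\frac{z}{m} - \log(1+z/m) = O(1/m^2)$ guarantees absolute convergence. Care is also needed to confirm that the $O(1/n)$ decay of $b_n$ kills the Abel boundary term and to track the lone $m=0$ term whose logarithmic part supplies the cancellation described above.
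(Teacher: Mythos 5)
Your proposal is correct and follows essentially the same route as the paper: Abel summation with $a_n = (-1)^n$, vanishing boundary term, collapse to the odd-indexed terms, and evaluation via the Weierstrass product for the gamma function. The only difference is cosmetic bookkeeping — the paper packages the collapsed series as $\log\sqrt[k]{\prod_{n\ge 1}\frac{2n+x-2}{2n+x+k-2}e^{k/(2n)}}$ and applies the product form of Weierstrass for $\Gamma(z+1)$ directly, while you use the logarithmic series form of Weierstrass for $\log\Gamma(z)$ and handle the resulting index shift with a telescoping correction and the $m=0$ term.
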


\begin{proof}
    We begin by applying Abel's summation formula \eqref{AbelSum} to our summation with $a_n = (-1)^n$ and $b_n = H_n -\log\sqrt[k]{(n+ x - 1)^{\overline{k}}} - \gamma$ to get
    \begin{align*}
        &\lim_{n \to \infty} \frac{-1 + (-1)^n}{2}\left(H_{n+1} -\log\sqrt[k]{(n+ x)^{\overline{k}}} - \gamma \right)\\
        &+ \sum_{n = 1}^\infty \frac{1 - (-1)^n}{2} \left(\frac{1}{n+1} + \log\sqrt[k]{\frac{n + x -1}{n + x + k - 1}} \right).
    \end{align*}
    It is easy to check that the limit evaluates to $0$. This leaves us with the infinite series:
    \begin{align*}
        &\sum_{n = 1, 3, 5,\ldots}  \frac{1 - (-1)^n}{2} \left(\frac{1}{n+1} + \log\sqrt[k]{\frac{n + x - 1}{n + x + k - 1}} \right)\\
        = &\sum_{n = 1}^\infty \left(\frac{1}{2n} + \log\sqrt[k]{\frac{2n + x - 2}{2n + x + k - 2}} \right)\\
        &= \log\sqrt[k]{\prod_{n = 1}^\infty \frac{2n + x - 2}{2n + x + k - 2} e^{k/(2n)}}.
    \end{align*}

    \noindent
    It is known that
    \begin{equation*}
        \Gamma(z+1) = e^{-\gamma z}\prod_{n = 1}^\infty \left(1 + \frac{z}{n}\right)^{-1} e^{z/n}, \quad z \notin \{0, -1, -2, \ldots\},
    \end{equation*}
    which follows from Weierstrass' definition of the gamma function (see ~\cite[p. 176]{Conway}) and the functional equation $z \Gamma(z) = \Gamma(z+1)$ (see ~\cite[p. 256]{AS}). Using this equality, we finally get
    \begin{align*}
        \log\sqrt[k]{\prod_{n = 1}^\infty \frac{2n + x - 2}{2n + x + k - 2} e^\frac{k}{2n}}
        &= \log\sqrt[k]{\frac{e^{-\gamma\left(\frac{x + k - 2}{2}\right)}\prod_{n = 1}^\infty\left(1 + \frac{x + k - 2}{2n}\right)^{-1} e^\frac{x + k - 2}{2n}}{e^{-\gamma\left(\frac{x - 2}{2}\right)}\prod_{n = 1}^\infty\left(1 + \frac{x - 2}{2n}\right)^{-1} e^\frac{x - 2}{2n}}  \cdot e^\frac{k \gamma}{2} }\\
        &= \frac{\gamma}{2} + \frac{1}{k}\log\left[\frac{\Gamma \left(\frac{x + k}{2}\right)}{\Gamma \left(\frac{x}{2}\right)}\right].
    \end{align*}
    
\end{proof}

\begin{corollary}
    \begin{enumerate}[label=(\roman*)]
    The following equalities hold:
    \item \label{HThm1Cor1i} If $k \in \{1 ,2 \ldots\}$, then
    $$\sum_{n=1}^\infty (-1)^n \left(H_n - \log\sqrt[k]{n^{\overline{k}}} - \gamma \right) = \frac{\gamma}{2} - \frac{\log(\pi)}{2k} + \frac{1}{k}\log\Gamma\left(\frac{k+1}{2}\right).$$
    \item \label{HThm1Cor1ii} If $x > 0$, then
    $$\sum_{n=1}^\infty (-1)^n \left(H_n - \log(n + x -1) - \gamma \right) = \frac{\gamma}{2} + \log\left[\frac{\Gamma\left(\frac{x + 1}{2}\right)}{\Gamma \left(\frac{x}{2}\right)} \right].$$
    \end{enumerate}
\end{corollary}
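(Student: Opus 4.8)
The plan is to obtain both identities as direct specializations of Theorem~\ref{HardyGenThm}, which already furnishes the general Hardy-type sum in closed form. No fresh summation or analytic argument is needed: each part follows by substituting a particular parameter value and simplifying the gamma factors, so the entire content lives in the theorem and the corollary is essentially bookkeeping.

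For part~\ref{HThm1Cor1i} I would set $x = 1$ in Theorem~\ref{HardyGenThm}. With $x = 1$ the shifted rising factorial collapses, $(n + x - 1)^{\overline{k}} = n^{\overline{k}}$, so the summand on the left becomes precisely $H_n - \log\sqrt[k]{n^{\overline{k}}} - \gamma$, matching the left-hand side of the corollary. On the right the theorem yields $\frac{\gamma}{2} + \frac{1}{k}\log\bigl[\Gamma(\tfrac{k+1}{2})/\Gamma(\tfrac{1}{2})\bigr]$, and it remains only to insert the special value $\Gamma(\tfrac{1}{2}) = \sqrt{\pi}$, so that $\frac{1}{k}\log\Gamma(\tfrac{1}{2}) = \frac{1}{2k}\log\pi$; splitting the logarithm of the quotient then produces the stated form $\frac{\gamma}{2} - \frac{\log\pi}{2k} + \frac{1}{k}\log\Gamma(\tfrac{k+1}{2})$.

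For part~\ref{HThm1Cor1ii} I would instead set $k = 1$. A single-factor Pochhammer symbol is $(n + x - 1)^{\overline{1}} = n + x - 1$, and a first root is trivial, so $\log\sqrt[1]{(n + x - 1)^{\overline{1}}} = \log(n + x - 1)$, again reproducing the left-hand side of the corollary. The right-hand side of the theorem with $k = 1$ reduces immediately to $\frac{\gamma}{2} + \log\bigl[\Gamma(\tfrac{x+1}{2})/\Gamma(\tfrac{x}{2})\bigr]$, which is exactly the claim.

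I do not anticipate any real obstacle here. The only points requiring care are the evaluation $\Gamma(\tfrac{1}{2}) = \sqrt{\pi}$ used in part~\ref{HThm1Cor1i} and the verification that each specialization respects the hypotheses of Theorem~\ref{HardyGenThm}, namely $k$ a positive integer and $x > 0$; this holds since part~\ref{HThm1Cor1i} uses $x = 1 > 0$ with $k$ arbitrary, and part~\ref{HThm1Cor1ii} uses $k = 1$ with $x > 0$ arbitrary.
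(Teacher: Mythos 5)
Your proposal is correct and matches the paper's proof exactly: the paper also obtains \ref{HThm1Cor1i} by setting $x=1$ in Theorem \ref{HardyGenThm} together with $\Gamma\left(\frac{1}{2}\right) = \sqrt{\pi}$, and \ref{HThm1Cor1ii} by setting $k=1$. Nothing is missing.
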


\begin{proof}
    We immediately get \ref{HThm1Cor1i} by letting $x = 1$ in Theorem \ref{HardyGenThm} and using the special value $\Gamma\left(\frac{1}{2}\right) = \sqrt{\pi}$ (see ~\cite[p. 255]{AS}) and simplifying. We immediately get \ref{HThm1Cor1ii} by letting $k = 1$.
\end{proof}

\begin{remark}
    Theorem \ref{HardyGenThm} can be used to solve problems 2.52, 2.53 (a), 2.53 (b), and 2.54 from \cite[pp. 42, 43]{MathAnalysisBook} with the appropriate values of $x$ and $k$.
\end{remark}

\begin{theorem} \label{HardyGenThm2}
For $k \in \{1, 2, \ldots\}$ and $x > 0$, we have
    \begin{align*}
        &\sum_{n = 1}^\infty \left(H_n - \log\sqrt[k]{(n + x - 1)^{\overline{k}}} -\gamma + \frac{x-2}{n} + \frac{k}{2n} \right)\\
        =& \gamma\left( x + \frac{k}{2} - 1\right) + \frac{1 -\log (2 \pi )}{2} + \frac{1}{k}\log\left[\frac{\barnesG(x+k)}{\barnesG(x)}\right]
    \end{align*}
    where $\barnesG$ denotes the Barnes G-function (see ~\cite{OGBarnesG}, ~\cite{BohrMollerupBook}).
\end{theorem}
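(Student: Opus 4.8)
The plan is to evaluate the series by computing the partial sums $S_N = \sum_{n=1}^N(\cdots)$ in closed form and then letting $N\to\infty$, using the functional equation of the Barnes $\barnesG$-function to telescope the Pochhammer contribution. First I would confirm convergence by expanding the summand: from $H_n = \log n + \gamma + \frac{1}{2n} + O(1/n^2)$ together with $\log\sqrt[k]{(n+x-1)^{\overline{k}}} = \frac{1}{k}\sum_{j=0}^{k-1}\log(n+x-1+j) = \log n + \frac{x-1}{n} + \frac{k-1}{2n} + O(1/n^2)$, one checks that the correction terms $\frac{x-2}{n} + \frac{k}{2n}$ are precisely those that cancel every $O(1/n)$ contribution, so the summand is $O(1/n^2)$ and the series converges absolutely, as in the footnote-style verifications used earlier in the paper.

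Next I would split the partial sum and evaluate each piece. The harmonic part uses $\sum_{n=1}^N H_n = (N+1)H_N - N$, while $\sum_{n=1}^N \frac{1}{n} = H_N$ disposes of the $\frac{x-2}{n}$ and $\frac{k}{2n}$ terms. The crucial step is the Pochhammer logarithm: writing $(n+x-1)^{\overline{k}} = \Gamma(n+x+k-1)/\Gamma(n+x-1)$ and invoking the functional equation $\barnesG(z+1) = \Gamma(z)\barnesG(z)$ in the form $\log\Gamma(z) = \log\barnesG(z+1) - \log\barnesG(z)$, each of $\sum_{n=1}^N \log\Gamma(n+x+k-1)$ and $\sum_{n=1}^N \log\Gamma(n+x-1)$ telescopes. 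This gives $\sum_{n=1}^N \log(n+x-1)^{\overline{k}} = \log\frac{\barnesG(N+x+k)}{\barnesG(N+x)} - \log\frac{\barnesG(x+k)}{\barnesG(x)}$, so the desired closed-form term $\frac{1}{k}\log\frac{\barnesG(x+k)}{\barnesG(x)}$ already appears and survives to the limit.

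It then remains to evaluate $\lim_{N\to\infty}\bigl[(N+1)H_N - N(1+\gamma) + (x-2+\tfrac{k}{2})H_N - \tfrac{1}{k}\log\frac{\barnesG(N+x+k)}{\barnesG(N+x)}\bigr]$. I would compute this with the asymptotic expansions $H_N = \log N + \gamma + \frac{1}{2N} + O(1/N^2)$ and $\log\barnesG(z+1) = \frac{z^2}{2}\log z - \frac{3z^2}{4} + \frac{z}{2}\log(2\pi) - \frac{1}{12}\log z + \zeta'(-1) + O(1/z)$; the Glaisher--Kinkelin constant $\zeta'(-1)$ cancels automatically in the difference $\log\barnesG(N+x+k) - \log\barnesG(N+x)$, which is why no such constant survives in the answer. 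The main obstacle is exactly the bookkeeping here: one must Taylor-expand $z_1^2\log z_1 - z_2^2\log z_2$ with $z_1 = N+x+k-1$ and $z_2 = N+x-1 = z_1 - k$ carefully enough to isolate the constant term while confirming that all divergent pieces of orders $N\log N$, $N$, and $\log N$ cancel against the $H_N$-contributions.

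Once that cancellation is verified, the surviving constant is exactly $\gamma(x+\tfrac{k}{2}-1) + \frac{1-\log(2\pi)}{2}$, which, combined with the telescoped term $\frac{1}{k}\log\frac{\barnesG(x+k)}{\barnesG(x)}$, yields the claimed identity. As a consistency check I would note that setting $x=k=1$ collapses the statement to the classical non-alternating Hardy series $\sum_{n=1}^\infty \bigl(H_n - \log n - \gamma - \frac{1}{2n}\bigr) = \frac{1+\gamma-\log(2\pi)}{2}$, since $\barnesG(2)/\barnesG(1) = \Gamma(1) = 1$.
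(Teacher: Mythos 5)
Your proposal is correct, and I verified the key bookkeeping: with $z_2 = N+x-1$ one gets $\frac{1}{k}\log\frac{\barnesG(N+x+k)}{\barnesG(N+x)} = z_2\log z_2 + \frac{k}{2}\log z_2 - z_2 + \frac{1}{2}\log(2\pi) + O(1/N)$, which against $(N+1)H_N - N(1+\gamma) + (x-2+\frac{k}{2})H_N$ cancels the $N\log N$, $N$, and $\log N$ terms and leaves exactly $\gamma(x+\frac{k}{2}-1) + \frac{1-\log(2\pi)}{2}$. However, your route is genuinely different from the paper's. The paper never computes partial sums or touches asymptotics of $\barnesG$: it uses the Pochhammer factorization to split the summand into a finite sum over $j = 1,\dots,k$ of series of the form $\sum_{n\ge 1}\bigl(H_n - \log(n+x+j-2) - \gamma + \cdots\bigr)$, evaluates each one by quoting the known value $\sum_{n\ge 1}\bigl(H_n - \log n - \gamma - \frac{1}{2n}\bigr) = \frac{\gamma+1-\log(2\pi)}{2}$ from Furdui's book together with the Weierstrass-product identity $\log\Gamma(z+1)+\gamma z = -\sum_{n\ge1}\bigl[\log(1+\frac{z}{n})-\frac{z}{n}\bigr]$, and only at the very end invokes $\barnesG(z+1)=\Gamma(z)\barnesG(z)$ (by induction) to condense $\prod_{j=1}^k\Gamma(x+j-1)$ into $\barnesG(x+k)/\barnesG(x)$. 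You instead use that same functional equation at the start, to telescope the partial sums exactly, and then pay for it with the generalized Stirling expansion of $\log\barnesG$. The trade-off: the paper's argument is shorter and needs no asymptotic expansion of $\barnesG$, but leans on two external identities; yours is heavier on asymptotic bookkeeping but is self-contained and, as your $x=k=1$ check shows, actually re-derives the cited Hardy-type series rather than assuming it. One small wording caution: each sum $\sum_{n=1}^N\log\Gamma(n+x-1)$ telescopes only after you rewrite $\log\Gamma(n+x-1) = \log\barnesG(n+x)-\log\barnesG(n+x-1)$; stated that way, your telescoping identity is exactly right.
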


\begin{proof}
    We know that the Pochhammer symbol $z^{\overline{k}} = z(z+1)\ldots(z+k-1)$ (see ~\cite[p. 256]{AS}, ~\cite[5.2(iii)]{DLMF}). Using this definition, we can turn the summand into a finite sum and change the order of summation:
    \begin{align}
        &\sum_{n = 1}^\infty \left(H_n - \log\sqrt[k]{(n + x - 1)^{\overline{k}}} -\gamma + \frac{x-2}{n} + \frac{k}{2n} \right) \nonumber\\
        = &\frac{1}{k}\sum_{n = 1}^\infty \left(k H_n - \log\left[(n + x - 1)^{\overline{k}}\right] -k\gamma + \frac{k(x-2)}{n} + \frac{k^2}{2n} \right) \nonumber\\
        = &\frac{1}{k}\sum_{j = 1}^k \sum_{n = 1}^\infty \left(H_n - \log(n + x + j -2) - \gamma + \frac{x + j}{n} - \frac{5}{2n} \right). 
    \label{HardyGen2Step1}
    \end{align}

    \noindent
    We now have an infinite series, which we will further simplify. Using the identity
    $$\sum_{n = 1}^\infty\left(H_n - \log(n) - \gamma - \frac{1}{2n} \right) = \frac{\gamma + 1 - \log(2\pi)}{2},$$
    (see ~\cite[p. 145]{LimitsSeriesBook}) we can separate the infinite series in \eqref{HardyGen2Step1} into two convergent series:
    \begin{align*}
        &\sum_{n = 1}^\infty\left(H_n - \log(n) - \gamma - \frac{1}{2n} \right) - \sum_{n = 1}^\infty \left(\log\left(1 + \frac{x + j -2}{n}\right) - \frac{x + j - 2}{n}\right)\\
        = &\frac{\gamma + 1 - \log(2\pi)}{2} - \sum_{n = 1}^\infty \left(\log\left(1 + \frac{x + j -2}{n}\right) - \frac{x + j - 2}{n}\right).
    \end{align*}

    It can be shown by applying the logarithm to Weierstrass' definition of the gamma function (see ~\cite[p. 176]{Conway}) and using the functional equation $\Gamma(z+1) = z\Gamma(z)$ (see ~\cite[p. 256]{AS}) to get that
    $$\log\Gamma(z+1) + \gamma z = - \sum_{n = 1}^\infty \left[\log\left(1 + \frac{z}{n}\right) - \frac{z}{n} \right].$$
    This series can also be found in ~\cite[p. 204]{IrresistibleIntegrals}. Using this identity, we get
    \begin{align*}
        &\gamma\left(x + j - \frac{3}{2}\right) + \frac{1 - \log(2\pi)}{2} + \log\Gamma(x + j - 1).
    \end{align*}
    We now substitute this expression into \eqref{HardyGen2Step1} to obtain the following:
    \begin{align*}
        &\frac{1}{k}\sum_{j = 1}^k \left[\gamma\left(x + j - \frac{3}{2}\right) + \frac{1 - \log(2\pi)}{2} + \log\Gamma(x + j - 1)\right]\\
        =&\gamma\left(x+\frac{k}{2}-1\right) + \frac{1 - \log(2\pi)}{2} + \frac{1}{k}\log\left[\prod_{j = 1}^k\Gamma(x + j - 1)\right].
    \end{align*}
    It can be shown by induction that
    $$\prod_{j = 1}^k\Gamma(x + j - 1) = \frac{\barnesG(x+k)}{\barnesG(x)}$$
    using the functional equation $\barnesG(z+1) = \barnesG(z)\Gamma(z)$ (see ~\cite[5.17.1]{DLMF}).
\end{proof}

\begin{corollary}
    \begin{enumerate}[label=(\roman*)]
    The following equalities hold:
    \item \label{HThm2Cor1i} If $k \in \{1, 2,\ldots\}$, then 
    $$\sum_{n = 1}^\infty \left(H_n - \log\sqrt[k]{n^{\overline{k}}} - \gamma + \frac{k - 2}{2n} \right) = \frac{\gamma k + 1 -\log (2 \pi )}{2} + \frac{\log\barnesG(k+1)}{k}.$$
    \item \label{HThm2Cor1ii} If $x >0$, then
    \begin{align*}
        &\sum_{n=1}^\infty \left(H_n - \log(n + x -1) - \gamma + \frac{x}{n} - \frac{3}{2n}\right)\\
        =& \gamma x + \log\Gamma(x) + \frac{1 - \gamma -\log (2\pi)}{2}.
    \end{align*}
    \end{enumerate}
\end{corollary}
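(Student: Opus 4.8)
The plan is to obtain both parts directly by specializing the two free parameters in Theorem \ref{HardyGenThm2}, so essentially no new analysis is needed: the work is entirely in checking that the specialized summand and closed form collapse to the stated expressions, using only the functional equation $\barnesG(z+1) = \barnesG(z)\Gamma(z)$ and the normalization $\barnesG(1) = 1$.

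For part \ref{HThm2Cor1i}, I would set $x = 1$ in Theorem \ref{HardyGenThm2}. On the left, the Pochhammer argument becomes $(n)^{\overline{k}} = n^{\overline{k}}$, and the linear tail $\frac{x-2}{n} + \frac{k}{2n}$ collapses to $-\frac{1}{n} + \frac{k}{2n} = \frac{k-2}{2n}$, reproducing the summand in \ref{HThm2Cor1i}. On the right, the prefactor $\gamma\!\left(x + \frac{k}{2} - 1\right)$ becomes $\frac{\gamma k}{2}$, and the Barnes quotient $\frac{1}{k}\log\!\left[\barnesG(x+k)/\barnesG(x)\right]$ becomes $\frac{1}{k}\log\barnesG(k+1)$ after invoking $\barnesG(1) = 1$. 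Combining $\frac{\gamma k}{2}$ with $\frac{1 - \log(2\pi)}{2}$ into $\frac{\gamma k + 1 - \log(2\pi)}{2}$ yields the claimed right-hand side.

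For part \ref{HThm2Cor1ii}, I would instead set $k = 1$. Then $\sqrt[k]{(n + x - 1)^{\overline{k}}} = n + x - 1$, and the tail $\frac{x-2}{n} + \frac{1}{2n}$ becomes $\frac{x}{n} - \frac{3}{2n}$, matching the summand in \ref{HThm2Cor1ii}. On the right, the Barnes quotient simplifies via $\barnesG(x+1) = \barnesG(x)\Gamma(x)$ to $\log\!\left[\barnesG(x+1)/\barnesG(x)\right] = \log\Gamma(x)$, while the prefactor reduces to $\gamma\!\left(x - \frac{1}{2}\right)$; collecting the constant and $\gamma$-terms then gives $\gamma x + \log\Gamma(x) + \frac{1 - \gamma - \log(2\pi)}{2}$.

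The only step requiring any attention is the reduction of the Barnes $G$-quotient in each case, and even that is immediate from the single functional equation $\barnesG(z+1) = \barnesG(z)\Gamma(z)$ (with $\barnesG(1) = 1$). There is no genuine obstacle here, since both identities are pure specializations of an already-established theorem and the verification is routine algebraic simplification.
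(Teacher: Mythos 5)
Your proposal is correct and follows exactly the paper's own route: part \ref{HThm2Cor1i} by setting $x=1$ and part \ref{HThm2Cor1ii} by setting $k=1$ in Theorem \ref{HardyGenThm2}, with the Barnes $G$-quotient reduced via $\barnesG(z+1)=\barnesG(z)\Gamma(z)$ and $\barnesG(1)=1$. You simply spell out the routine simplifications that the paper leaves implicit, and all of your algebra checks out.
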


\begin{proof}
    \ref{HThm2Cor1i} immediately follows from setting $x = 1$ in Theorem \ref{HardyGenThm2}. \ref{HThm2Cor1ii} follows from setting $k = 1$ in Theorem \ref{HardyGenThm2}.
\end{proof}

\begin{theorem} \label{HardyGenThm3}
    For $x > 0$, we have
    \begin{align*}
        &\sum_{n = 1}^\infty (-1)^n n\left(H_n - \log(n + x - 1) - \gamma + \frac{x}{n} - \frac{3}{2n}\right)\\
        = &\frac{\gamma }{4} - \frac{x -\log (2) - 1}{2} - 2 \log \left[\frac{\barnesG\left(\frac{x+1}{2}\right)}{\barnesG\left(\frac{x}{2}\right)}\right] + \log\Gamma \left(\frac{x}{2}\right).
    \end{align*}
\end{theorem}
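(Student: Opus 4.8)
The plan is to strip the factor $(-1)^n n$ by Abel's summation formula and reduce the sum to products of Gamma and Barnes $G$-functions, exactly as in the proofs of Theorems~\ref{HardyGenThm} and~\ref{HardyGenThm2}. Write $c_n := H_n - \log(n+x-1) - \gamma + \frac{x}{n} - \frac{3}{2n}$, so that the summand is $(-1)^n n\, c_n$. Stirling's expansion for $H_n$ gives $c_n = O(n^{-2})$ (the $1/n$ term cancels), so the series converges. Applying \eqref{AbelSum} with $a_n = (-1)^n$ and $b_n = n c_n$, one has $A_n = \frac{(-1)^n - 1}{2}$, which is $-1$ for odd $n$ and $0$ for even $n$; since $b_{n+1} = O(n^{-1})$ the boundary term vanishes, and the formula collapses to $S = \sum_{m=1}^\infty (b_{2m} - b_{2m-1})$.

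First I would compute the difference in closed form. Using $H_n = H_{n-1} + 1/n$ and cancelling the $\frac{x}{n}$ and $\frac{3}{2n}$ contributions, a short manipulation gives
$$b_{2m} - b_{2m-1} = H_{2m-1} + 1 - \gamma - 2m\log(2m+x-1) + (2m-1)\log(2m+x-2).$$
Summing the logarithmic part over $m \le M$ and reindexing by $j = 2m-2, 2m-1$ shows the log terms assemble into $L_M := \sum_{j=0}^{2M-1}(-1)^j (j+1)\log(x+j)$. Splitting $L_M$ into even and odd $j$ and using $\log(x+2i) = \log 2 + \log(\tfrac{x}{2}+i)$ and $\log(x+2i+1) = \log 2 + \log(\tfrac{x+1}{2}+i)$ produces (up to an elementary $-M\log 2$) two weighted logarithmic sums of the shape $\sum_{i=0}^{M-1}(2i+c)\log(a+i)$ with $a \in \{\tfrac{x}{2}, \tfrac{x+1}{2}\}$. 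This even/odd split is what forces the half-argument Gamma and Barnes $G$ values in the final answer.

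The key lemma I would isolate is a discrete summation by parts for such weighted sums. Since $\log(a+i) = \log\Gamma(a+i+1) - \log\Gamma(a+i)$, summation by parts yields
$$\sum_{i=0}^{M-1}(a+i)\log(a+i) = (a+M)\log\Gamma(a+M) - a\log\Gamma(a) - \log\frac{\barnesG(a+M+1)}{\barnesG(a+1)},$$
where I have used $\sum_{i=1}^{M}\log\Gamma(a+i) = \log[\barnesG(a+M+1)/\barnesG(a+1)]$, a consequence of the functional equation $\barnesG(z+1) = \barnesG(z)\Gamma(z)$. Together with the elementary $\sum_{i=0}^{M-1}\log(a+i) = \log[\Gamma(a+M)/\Gamma(a)]$, this expresses every weighted log-sum through $\log\Gamma$ and $\log\barnesG$ at argument $a+M$ and at the fixed argument $a$. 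I would also record the closed form $\sum_{m=1}^M H_{2m-1} = \tfrac12\big[(2M+1)H_{2M} - 2M - \tfrac12 H_M\big]$, obtained from $\sum_{k=1}^N H_k = (N+1)H_N - N$ and the even/odd splitting of harmonic numbers.

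Finally I would let $M \to \infty$, inserting Stirling's formula for the $\Gamma$-values and the Barnes asymptotic $\log \barnesG(z+1) = \tfrac{z^2}{2}\log z - \tfrac{3z^2}{4} + \tfrac{z}{2}\log(2\pi) - \tfrac{1}{12}\log z + \zeta'(-1) + o(1)$ for the $\barnesG$-values. The expectation is that all growing contributions — the $M^2\log M$, $M\log M$, $M$, $M\log 2$ and $\log M$ terms coming from $\sum_{m}H_{2m-1}$, from the two $\Gamma$-Stirling expansions and from the two Barnes asymptotics — cancel in pairs, and that the Glaisher--Kinkelin constant $\zeta'(-1)$ cancels between the even ($a=\tfrac{x}{2}$) and odd ($a=\tfrac{x+1}{2}$) Barnes contributions. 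The surviving finite terms are $\log\Gamma(\tfrac{x}{2})$, $\log\Gamma(\tfrac{x+1}{2})$ and $\log\barnesG$ at $\tfrac{x}{2}+1$, $\tfrac{x+1}{2}+1$, $\tfrac{x}{2}$, $\tfrac{x+1}{2}$; repeatedly applying $\barnesG(z+1)=\barnesG(z)\Gamma(z)$ to collapse these leaves the stated closed form $\tfrac{\gamma}{4} - \tfrac{x-\log 2 - 1}{2} - 2\log[\barnesG(\tfrac{x+1}{2})/\barnesG(\tfrac{x}{2})] + \log\Gamma(\tfrac{x}{2})$. The main obstacle is precisely this last step: the asymptotic bookkeeping is delicate, and the entire identity hinges on the exact cancellation of the divergent Stirling/Barnes tails and of the Glaisher constant.
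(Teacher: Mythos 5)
Your route is genuinely different from the paper's. The paper splits the series into the known sum \eqref{HardySeriesFurdui}, $\sum_{n\ge 1}(-1)^n n\left(H_n-\log n-\gamma-\tfrac{1}{2n}\right)=\tfrac{\gamma+1}{4}+\tfrac{7}{12}\log 2-3\log A$, plus a correction term, rewrites that correction as $\tfrac12\int_1^x(t-1)\left[\psi\left(\tfrac{t+1}{2}\right)-\psi\left(\tfrac{t}{2}\right)\right]dt$ via the digamma series for $\sum_{n\ge 0}\frac{(-1)^n}{n+z}$, and finishes with negapolygamma special values and the identity $\zeta'(-1)-\zeta'(-1,z)=\log\barnesG(z+1)-z\log\Gamma(z)$. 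You instead stay entirely at the level of partial sums: Abel summation \eqref{AbelSum}, the telescoped differences $b_{2m}-b_{2m-1}$, exact Gamma/Barnes closed forms for the weighted logarithmic sums, then a limit $M\to\infty$. Your intermediate identities are all correct: I checked $b_{2m}-b_{2m-1}=H_{2m-1}+1-\gamma-2m\log(2m+x-1)+(2m-1)\log(2m+x-2)$, the assembly into $L_M=\sum_{j=0}^{2M-1}(-1)^j(j+1)\log(x+j)$, the summation-by-parts lemma (at $M=1$ both sides equal $a\log a$), and $\sum_{m=1}^M H_{2m-1}=\tfrac12\left[(2M+1)H_{2M}-2M-\tfrac12 H_M\right]$. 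Your approach is more elementary in that it quotes no external series value and needs no digamma/negapolygamma machinery — only Stirling and the Barnes asymptotic.

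However, as written the argument is not a proof: the entire evaluation is deferred to the final limit, which you only assert ("the expectation is that all growing contributions cancel"). That step is not one that would fail — it can be completed — but in your approach it carries essentially all the weight, so leaving it as an expectation is a genuine gap. Two structural observations would make it tractable and confirm your claimed cancellations. First, with $a=\tfrac{x}{2}$, $b=\tfrac{x+1}{2}$ the two weighted sums carry the \emph{same} linear weight, since $1-2a=2-2b$; hence $L_M+M\log 2=2(T_a-T_b)+(1-2a)(U_a-U_b)$ with $T_c=\sum_{i=0}^{M-1}(c+i)\log(c+i)$ and $U_c=\sum_{i=0}^{M-1}\log(c+i)$. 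Second, after applying your lemma and collapsing via $\barnesG(c+1)=\barnesG(c)\Gamma(c)$, the $M$-independent terms equal $\log\Gamma(a)+(2b-2a-1)\log\Gamma(b)-2\log\left[\barnesG(b)/\barnesG(a)\right]$, and the coefficient $2b-2a-1$ vanishes identically; this is exactly where $\log\Gamma\left(\tfrac{x}{2}\right)-2\log\left[\barnesG\left(\tfrac{x+1}{2}\right)/\barnesG\left(\tfrac{x}{2}\right)\right]$ comes from, with no stray $\Gamma\left(\tfrac{x+1}{2}\right)$ surviving, and it also makes the cancellation of $\zeta'(-1)$ between the two Barnes expansions manifest. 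What remains is to expand the divergent part carefully — Stirling must be taken through the $\tfrac{1}{12z}$ term, since it is multiplied by $c+M$ and contributes a finite $\tfrac{1}{12}$, and the Barnes expansion through $o(1)$ — and to verify that the leftover constants, together with $\tfrac{\gamma}{4}+\tfrac14+\tfrac12\log 2$ from the harmonic block, produce $\tfrac{\gamma}{4}+\tfrac{1+\log 2-x}{2}$. (A minor additional point: $c_n=O(n^{-2})$ alone does not give convergence of $\sum(-1)^n nc_n$; you need the sharper $nc_n=\frac{C}{n}+O(n^{-2})$ with $C=\tfrac{(x-1)^2}{2}-\tfrac{1}{12}$, which does hold.) Until that bookkeeping is written down, you have a correct and interesting strategy, not a complete proof.
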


\begin{proof}
    We begin by separating the infinite series:
    \begin{align}
        &\sum_{n = 1}^\infty (-1)^n n\left(H_n - \log(n + x - 1) - \gamma + \frac{x}{n} - \frac{3}{2n}\right) \nonumber\\
        \begin{split}
            = &\sum_{n = 1}^\infty (-1)^n n\left(H_n - \log(n) - \gamma - \frac{1}{2n} \right)\\
            &- \sum_{n = 1}^\infty (-1)^n n\left( \log\left(\frac{n + x - 1}{n}\right) - \frac{x - 1}{n}\right) \label{HardyThm3Step}.
        \end{split}
    \end{align}

    \noindent
    It is known from ~\cite[p. 145]{LimitsSeriesBook} that
    \begin{equation} \label{HardySeriesFurdui}
        \sum_{n = 1}^\infty (-1)^n n\left(H_n - \log(n) - \gamma - \frac{1}{2n} \right) = \frac{\gamma + 1}{4} + \frac{7}{12}\log(2) - 3\log(A)
    \end{equation}
    where $A$ is the Glaisher-Kinkelin constant (see ~\cite[pp. 135-138]{FinchConstant}).

    \noindent
    We are now left with another infinite series which we will evaluate by expressing the summand as an integral and interchanging the order of summation and integration \footnote{The interchange of the sum and integral is justified via Monotone Convergence Theorem (see ~\cite[Theorem 8.5, p. 162]{MeasureTheoryBook}) with $0 \leq \sum_{n = 1}^{2N} \frac{(-1)^{n-1}}{n+t-1} \leq \sum_{n = 1}^{2(N+1)} \frac{(-1)^{n-1}}{n+t-1}$ for real numbers $t >0$ and integers $N \geq 1$.}:
    \begin{align*}
        \sum_{n = 1}^\infty (-1)^n n\left( \log\left(1+\frac{x - 1}{n}\right) - \frac{x -1}{n}\right)
        &= \int_1^x \sum_{n = 1}^\infty \frac{(-1)^{n-1} (t-1)}{n+t-1} \,dt.
    \end{align*}

    \noindent
    From ~\cite[p. 32]{EulerSumContour} we know that
    $$\sum_{n = 0}^\infty \frac{(-1)^n}{n +z} = \frac{1}{2}\psi\left(\frac{z + 1}{2}\right) - \frac{1}{2} \psi\left(\frac{z}{2}\right)$$
    where $\psi$ denotes the digamma function (see ~\cite[pp. 258, 259]{AS}). Plugging in this identity leaves us with an integral, which we will evaluate via integration by parts and expressing the antidervative in terms of the polygamma functions of negative order (see ~\cite{NegaPolygamma}):
    \begin{align*}
        &\frac{1}{2}\int_1^x (t-1)\left(\psi\left( \frac{t + 1}{2} \right) -  \psi\left(\frac{t}{2}\right)\right)\,dt\\
        &= (x-1)\log\left[\frac{\Gamma\left(\frac{x+1}{2}\right)}{\Gamma\left(\frac{x}{2}\right)}\right] - \int_1^x\left(\log\Gamma\left( \frac{t + 1}{2}\right) - \log\Gamma\left(\frac{t}{2}\right) \right)\,dt.
    \end{align*}

    \noindent
    We integrated the digamma function using the identity $\psi(z) = d[\log(\Gamma(z))]/dz$ (see ~\cite[6.3.1, p. 258]{AS}). Since the antidervative of $\log\Gamma(z)$ is $\psi^{(-2)}(z)$ (see ~\cite[p. 196]{NegaPolygamma}), we are left with
    \begin{align*}
        & (x - 1)\log\left[\frac{\Gamma\left(\frac{x+1}{2}\right)}{\Gamma\left(\frac{x}{2}\right)}\right] - 2\left[\psi^{(-2)}\left( \frac{x + 1}{2}\right) - \psi^{(-2)}\left(\frac{x}{2}\right) - \psi^{(-2)}(1) + \psi^{(-2)}\left(\frac{1}{2}\right)\right].
    \end{align*}

    Now we can further simplify by determining the special values of $\psi^{(-2)}\left(\frac{1}{2}\right)$ and $\psi^{(-2)}(1)$. It is known from \cite[p. 196]{NegaPolygamma} that
    \begin{equation}\label{order-2polygamma}
        \psi^{(-2)}(z) = \frac{z(1 -z)}{2} + \frac{z}{2}\log(2\pi) - \zeta'(-1) + \zeta'(-1, z)
    \end{equation}
    where $\zeta'(s, a)$ denotes the partial derivative of the Hurwitz zeta function $\zeta(s,a) = \sum_{n = 0}^\infty (n+a)^{-s}$ with respect to $s$. It is also known from \cite[p. 194]{NegaPolygamma} that
    $$\zeta'(-1) = \frac{1}{12} - \log(A)$$
    and from \cite[p. 205]{MillerDerivatives} that
    $$\zeta'\left(-2k + 1, \frac{1}{2} \right) = - \frac{B_{2k} \log(2)}{4^k k} - \frac{\left(2^{2k - 1} - 1\right)\zeta'(-2k + 1)}{2^{2k - 1}}$$
    where $B_n$ is the $n$th Bernoulli number (see ~\cite[pp. 804-806]{AS}, ~\cite{BernoulliNumbersPaper}) and $k$ is a positive integer.
    From these equalities, we deduce the following:
    $$\psi^{(-2)}\left(\frac{1}{2}\right) = \frac{3 \log (A)}{2}+\frac{5 \log (2)}{24}+\frac{\log (\pi )}{4},\quad \psi^{(-2)}(1) = \frac{\log (2 \pi)}{2}.$$

    \noindent
    With these special values, we now have
    \begin{align}
    \label{AlternateFormHardyGen3}
        \begin{split}
            &\frac{\log (\pi )}{2} - 3 \log(A) + \frac{7}{12}\log (2) + (x -1)\log\left[\frac{\Gamma\left(\frac{x+1}{2}\right)}{\Gamma\left(\frac{x}{2}\right)}\right]\\
            &- 2\left[\psi^{(-2)}\left( \frac{x + 1}{2}\right) -  \psi^{(-2)}\left(\frac{x}{2}\right)\right].
        \end{split}
    \end{align}

    \noindent
    Using \eqref{order-2polygamma} and the identity from \cite[p. 197]{NegaPolygamma}
    $$\zeta'(-1) - \zeta'(-1,z) = \log\barnesG(z+1) - z\log\Gamma(z)$$
    and recombining \eqref{HardySeriesFurdui} and \eqref{AlternateFormHardyGen3} with \eqref{HardyThm3Step}, we get the desired result.
\end{proof}

\begin{remark}
    We could also have expressed the closed form of the series from Theorem \ref{HardyGenThm3} with \eqref{AlternateFormHardyGen3}. We can then use \eqref{order-2polygamma} and the formulae from \cite{MillerDerivatives} and \cite{PolygammaSpecialValues} to determine special values. For example, if it were necessary to evaluate $\psi^{(-2)}(p)$ at $p = \frac{1}{4}, \frac{3}{4}$, one may use the following formulae from those papers:
    \begin{align*}
        \left.
        \begin{array}{cc}
                \zeta'\left(-2k+1, \frac{1}{4}\right) & \\
                \zeta'\left(-2k+1, \frac{3}{4}\right) &
    	\end{array}
        \right\}
        =& \mp \frac{\left(4^k - 1\right)B_{2k}\pi}{4^{k+1}k} + \frac{\left(4^{k-1} - 1\right)B_{2k} \log(2)}{2^{4k-1}k}\\
        &\mp \frac{(-1)^k \psi^{(2k-1)}\left(\frac{1}{4}\right)}{4(8\pi)^{2k-1}} - \frac{\left(2^{2k-1} - 1\right)\zeta'(-2k+1)}{2^{4k-1}}
    \end{align*}
    and
     \begin{align*}
        \left.
        \begin{array}{cc}
                \psi^{(2k-1)}\left(\frac{1}{4}\right) & \\
                \psi^{(2k-1)}\left(\frac{3}{4}\right) &
    	\end{array}
        \right\}
        =& \frac{4^{2k-1}}{2k}\left[\pi^{2k}\left(2^{2k} -1 \right)\left|B_{2k} \right| \pm 2(2k)!\beta(2k)\right]
    \end{align*}
    where $\psi^{(n)}$ denotes the polygamma function of $n$th order (see ~\cite[p. 260]{AS}, ~\cite[5.15]{DLMF}) and $\beta(s) = \sum_{n = 0}^\infty \frac{(-1)^n}{(2n+1)^s}$ denotes the Dirichlet beta function (see ~\cite[pp. 807, 808]{AS}).
\end{remark}

By letting $x$ in Theorem \ref{HardyGenThm3} take on certain values, we may derive some exotic series.

\begin{corollary}\label{HThm3Cor}
    \begin{enumerate}[label=(\roman*)]
    The following equalities hold:
    \item \label{HThm3Cor1i} If $x = 1/2$, we have
    $$\sum_{n = 1}^\infty (-1)^n n \left(H_n - \log\left( n - \frac{1}{2}\right) - \gamma - \frac{1}{n} \right) = \frac{\gamma + 1}{4} -\frac{G}{\pi } - \frac{1}{4}\log \left(\frac{\varpi^2}{\pi}\right)$$
    where $G = \beta(2) = \sum_{n=0}^\infty \frac{(-1)^n}{(2n+1)^2}$ is Catalan's constant (see ~\cite[pp. 53-57]{FinchConstant}) and $\varpi$ is the lemniscate constant (see ~\cite[pp. 420-422]{FinchConstant}).
    
    \item \label{HThm3Cor1ii} If $x = 2/3$, then
    \begin{align*}
        &\sum_{n = 1}^\infty (-1)^n n \left(H_n - \log\left( n - \frac{1}{3}\right) - \gamma - \frac{5}{6n} \right)\\
        &= \frac{\gamma + 1}{4} - \frac{5\kappa}{6 \pi} - \log(A) + \frac{19}{36}\log(2) + \frac{\log(3)}{24} + \frac{1}{3}\log\left[\frac{\Gamma\left(\frac{5}{6}\right)}{\Gamma\left(\frac{1}{3}\right)} \right].
    \end{align*}
    where $\kappa$ denotes Gieseking's constant (see \cite{GiesekingOEIS}).
    \end{enumerate}

\end{corollary}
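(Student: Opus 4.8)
The plan is to obtain both identities as specializations of Theorem~\ref{HardyGenThm3}: the summand in \ref{HThm3Cor1i} is the summand of Theorem~\ref{HardyGenThm3} at $x=\tfrac12$, and that of \ref{HThm3Cor1ii} is the one at $x=\tfrac23$. First I would check the specialization of the tail terms, $\frac{x}{n}-\frac{3}{2n}$, which collapses to $-\frac1n$ at $x=\tfrac12$ and to $-\frac{5}{6n}$ at $x=\tfrac23$, matching the two stated series. Rather than the Barnes-$G$ form, I would work with the equivalent closed form obtained by combining \eqref{HardySeriesFurdui} and \eqref{AlternateFormHardyGen3} (as the Remark indicates is possible); after cancellation of the $\tfrac{7}{12}\log 2$ and $-3\log A$ terms this reads
\[
\frac{\gamma+1}{4} - \frac{\log\pi}{2} - (x-1)\log\!\left[\frac{\Gamma\!\left(\frac{x+1}{2}\right)}{\Gamma\!\left(\frac{x}{2}\right)}\right] + 2\left[\psi^{(-2)}\!\left(\tfrac{x+1}{2}\right) - \psi^{(-2)}\!\left(\tfrac{x}{2}\right)\right].
\]
Since the polynomial and $\log(2\pi)$ pieces of $\psi^{(-2)}$ are elementary by \eqref{order-2polygamma}, the whole problem reduces to evaluating $\zeta'(-1,\tfrac{x+1}{2})-\zeta'(-1,\tfrac{x}{2})$ at the two chosen arguments.

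For \ref{HThm3Cor1i} ($x=\tfrac12$) the arguments are $\tfrac34$ and $\tfrac14$. I would extract $\zeta'(-1,\tfrac14)$ and $\zeta'(-1,\tfrac34)$ from the Remark's formula at $k=1$, which requires the trigamma value $\psi^{(1)}(\tfrac14)=\pi^2+8G$ (the $k=1$ case of the quoted formula, with $\beta(2)=G$); this gives $\zeta'(-1,\tfrac14)-\zeta'(-1,\tfrac34)=\frac{G}{2\pi}$, whence $\psi^{(-2)}(\tfrac34)-\psi^{(-2)}(\tfrac14)=\tfrac14\log(2\pi)-\frac{G}{2\pi}$ via \eqref{order-2polygamma}. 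Substituting, the $\log(2\pi)$ terms collapse against $-\tfrac{\log\pi}{2}$ to leave $\tfrac{\log2}{2}$, the combination $-\frac{G}{\pi}$ appears, and the surviving factor $\tfrac12\log[\Gamma(\tfrac34)/\Gamma(\tfrac14)]$ remains. The final step is to convert $\tfrac{\log2}{2}+\tfrac12\log[\Gamma(\tfrac34)/\Gamma(\tfrac14)]$ into $-\tfrac14\log(\varpi^2/\pi)$, which I would do with the reflection formula $\Gamma(\tfrac14)\Gamma(\tfrac34)=\pi\sqrt2$ and the lemniscate relation $\varpi=\Gamma(\tfrac14)^2/(2\sqrt{2\pi})$; both sides then agree and \ref{HThm3Cor1i} follows.

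For \ref{HThm3Cor1ii} ($x=\tfrac23$) the structure is identical, with arguments $\tfrac56$ and $\tfrac13$. Here the factor $-(x-1)\log[\Gamma(\tfrac{x+1}{2})/\Gamma(\tfrac{x}{2})]=\tfrac13\log[\Gamma(\tfrac56)/\Gamma(\tfrac13)]$ passes untouched into the stated answer, and the elementary pieces of \eqref{order-2polygamma} supply the rational constant together with the $\log2$ and $\log3$ terms. What remains is to evaluate $\zeta'(-1,\tfrac56)-\zeta'(-1,\tfrac13)$, and this is where the Glaisher--Kinkelin constant $A$ (through $\zeta'(-1)=\tfrac1{12}-\log A$) and Gieseking's constant $\kappa$ enter.

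I expect the main obstacle to be exactly this last evaluation at denominators $3$ and $6$, since the explicit formulas reproduced in the Remark are tailored to $\tfrac14$ and $\tfrac34$. I would handle it by appealing to the Hurwitz--Fourier representation of $\zeta(s,a)$ differentiated at $s=-1$, or equivalently to the special-value formulas for $\zeta'(-1,p/q)$ in \cite{MillerDerivatives} and \cite{PolygammaSpecialValues}; these express $\zeta'(-1,\tfrac13)$ and $\zeta'(-1,\tfrac56)$ through the Clausen function $\mathrm{Cl}_2(\pi/3)$ (hence $\kappa$), $\log A$, and elementary logarithms. Reducing the Clausen-function combination to the single term $\frac{5\kappa}{6\pi}$ and collecting the $\log2,\log3$ contributions is the delicate bookkeeping; once it is done, substitution into the displayed closed form yields \ref{HThm3Cor1ii}.
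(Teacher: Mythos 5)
Your part \ref{HThm3Cor1i} is correct and is essentially the paper's argument in different bookkeeping. The paper substitutes $x=1/2$ directly into the Barnes-$G$ closed form of Theorem \ref{HardyGenThm3} and uses Adamchik's value of $\log\barnesG\left(\frac34\right)-\log\barnesG\left(\frac14\right)$ together with $\varpi=\Gamma^2\left(\frac14\right)/(2\sqrt{2\pi})$; you instead work from the equivalent pre-Barnes form obtained by combining \eqref{HardySeriesFurdui} with \eqref{AlternateFormHardyGen3} and evaluate $\zeta'\left(-1,\frac14\right)-\zeta'\left(-1,\frac34\right)=\frac{G}{2\pi}$ via the Remark's formulas at $k=1$. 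Since \eqref{order-2polygamma} and the identity $\zeta'(-1)-\zeta'(-1,z)=\log\barnesG(z+1)-z\log\Gamma(z)$ make the two closed forms interchangeable, this is the same computation in a different dress, and your numerics ($\psi^{(1)}\left(\frac14\right)=\pi^2+8G$, the collapse to $\frac{\log 2}{2}-\frac{G}{\pi}+\frac12\log\left[\Gamma\left(\frac34\right)/\Gamma\left(\frac14\right)\right]$, and the conversion via $\Gamma\left(\frac14\right)\Gamma\left(\frac34\right)=\pi\sqrt2$) all check out.

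Part \ref{HThm3Cor1ii}, however, has a genuine gap: everything that makes this case nontrivial is deferred rather than done. You correctly reduce the problem to evaluating $\zeta'\left(-1,\frac56\right)-\zeta'\left(-1,\frac13\right)$, but then declare the reduction to $\frac{5\kappa}{6\pi}$ and the collection of $\log 2$, $\log 3$ terms to be ``delicate bookkeeping'' handled by unspecified formulas from \cite{MillerDerivatives} and \cite{PolygammaSpecialValues} --- and, as you yourself note, the explicit formulas quoted in the paper's Remark apply only to the arguments $\frac14$ and $\frac34$, so nothing you have written down actually produces the values you need at $\frac13$ and $\frac56$. The emergence of the specific combination $-\frac{5\kappa}{6\pi}-\log A+\frac{19}{36}\log 2+\frac{\log 3}{24}$ \emph{is} the proof of this part, and it is absent. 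For comparison, the paper closes it in three concrete steps: (a) Adamchik's special values \eqref{Step1iFinalCorollary} and \eqref{Step1iiFinalCorollary}, expressing $\log\barnesG\left(\frac13\right)$ and $\log\barnesG\left(\frac56\right)$ through $\psi^{(1)}\left(\frac13\right)$, $\psi^{(1)}\left(\frac56\right)$, $\log A$ and log-gamma values; (b) the polygamma multiplication formula with $m=2$, $z=\frac13$ combined with the reflection formula \eqref{polyreflect}, yielding $\psi^{(1)}\left(\frac56\right)=\frac{16\pi^2}{3}-5\psi^{(1)}\left(\frac13\right)$; and (c) the Gieseking identity together with reflection and the recurrence $\psi^{(1)}\left(\frac43\right)=\psi^{(1)}\left(\frac13\right)-9$, giving $\psi^{(1)}\left(\frac13\right)=2\sqrt3\,\kappa+\frac{2\pi^2}{3}$. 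Your outline would become a proof only once you supply the analogous explicit evaluations (equivalently, $\zeta'\left(-1,\frac13\right)$ and $\zeta'\left(-1,\frac56\right)$ in terms of $\log A$ and $\psi^{(1)}\left(\frac13\right)$) and carry out the cancellation; as written, part (ii) is a plan, not a proof.
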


\begin{proof}
    If we let $x=1/2$ in Theorem \ref{HardyGenThm3} and use the special value
    $$\log\barnesG\left(\frac{3}{4}\right) = \log\barnesG\left(\frac{1}{4}\right) + \frac{G}{2\pi} - \frac{\log(2)}{8} - \frac{\log(\pi)}{4} + \log\Gamma\left(\frac{1}{4}\right)$$
    and the identity
    $$\varpi = \frac{\Gamma^2\left(\frac{1}{4}\right)}{2\sqrt{2\pi}}$$
    (see ~\cite[p. 94]{BarnesZetaSeries}, ~\cite[p. 420]{FinchConstant}), we get the desired result in \ref{HThm3Cor1i}.

    If we let $x = 2/3$, we get
    \begin{equation} \label{FinalCorollary}
        \frac{\gamma}{4}+\frac{\log (2)}{2} +\frac{1}{6} + 2\log \barnesG\left(\frac{1}{3}\right) -2 \log \barnesG\left(\frac{5}{6}\right) +\log\Gamma \left(\frac{1}{3}\right).
    \end{equation}

    \noindent
    We will make use of the following special values (see \cite{BarnesGAdam}):
    \begin{align} \label{Step1iFinalCorollary}
        \log \barnesG\left(\frac{1}{3}\right) &= \frac{\log(3)}{72} + \frac{\pi}{18 \sqrt{3}} - \frac{2}{3}\log\Gamma\left(\frac{1}{3}\right) - \frac{4}{3}\log(A) - \frac{\psi^{(1)}\left(\frac{1}{3}\right)}{12 \pi \sqrt{3}} + \frac{1}{9}\\
        \label{Step1iiFinalCorollary}
        \log \barnesG\left(\frac{5}{6}\right) &= -\frac{\log(12)}{144} + \frac{\pi}{20\sqrt{3}} - \frac{1}{6}\log\Gamma\left(\frac{5}{6}\right) - \frac{5}{6}\log(A) - \frac{\psi^{(1)}\left(\frac{5}{6}\right)}{40 \pi \sqrt{3}} +\frac{5}{72}.
    \end{align}

    \noindent
    Using the polygamma multiplication formula (see ~\cite[5.15.7]{DLMF})
    $$\psi^{(n)}(mz) = \frac{1}{m^{n+1}}\sum_{k = 0}^{m-1}\psi^{(n)}\left(z + \frac{k}{m} \right), \quad m,n \in \{1, 2, \ldots\}$$
    with $m=2$, $n = 1$, and $z = 1/3$ alongside the polygamma reflection formula (see ~\cite[5.15.6]{DLMF})
    \begin{equation}
    \label{polyreflect}
        \psi^{(n)}(1-z) + (-1)^{n-1} \psi^{(n)}(z) = (-1)^n \pi \frac{d^n}{dz^n}\cot(\pi z)
    \end{equation}
    with $z = 2/3$ and $n = 1$,
    we get that
    \begin{equation}
        \label{polygamma56}
        \psi^{(1)}\left(\frac{5}{6}\right) = \frac{16 \pi ^2}{3} - 5\psi^{(1)}\left(\frac{1}{3}\right).
    \end{equation}
    Substituting \eqref{polygamma56} into \eqref{Step1iiFinalCorollary} and plugging \eqref{Step1iFinalCorollary} and \eqref{Step1iiFinalCorollary} into \eqref{FinalCorollary}, we have
    \begin{equation} \label{FinalCorollary2}
        \frac{\gamma + 1}{4} + \frac{5 \pi}{18 \sqrt{3}} - \log(A) + \frac{19}{36}\log(2) + \frac{\log(3)}{24} + \frac{1}{3}\log\left[\frac{\Gamma\left(\frac{5}{6}\right)}{\Gamma\left(\frac{1}{3}\right)} \right] - \frac{5 \psi^{(1)}\left(\frac{1}{3}\right)}{12 \pi \sqrt{3}}.
    \end{equation}
    We use the identity
    $$\kappa = \frac{9 - \psi^{(1)}\left(\frac{2}{3}\right) + \psi^{(1)}\left(\frac{4}{3}\right)}{4 \sqrt{3}}$$
    and substitute the $\psi^{(1)}(2/3) = 4\pi^2/3 - \psi^{(1)}(1/3)$, which comes from \eqref{polyreflect}, and $\psi^{(1)}(4/3) = \psi^{(1)}(1/3) - 9$, which comes from the recursive relationship
    $$\psi^{(n)}(z+1) = \psi^{(n)}(z) + (-1)^n n! z^{-n-1}$$
    with $n = 1$ and $z = 1/3$ (see ~\cite[5.15.5]{DLMF}). This gives us the special value
    $$\psi^{(1)}\left(\frac{1}{3}\right) = 2 \sqrt{3} \kappa + \frac{2\pi^2}{3}.$$
    Substituting this value into \eqref{FinalCorollary2} yields the result in \ref{HThm3Cor1ii}.
\end{proof}

\section{Further Research}
The interested reader may consider a generalization to the series $S_2$:
$$\sum_{n=1}^\infty \frac{\overline{H}_n}{n}\left(\zeta(k) - 1 - \frac{1}{2^k} - \ldots \frac{1}{n^k}\right),\quad n \in \{2, 3, \ldots\}.$$
One may also consider other analogues to the Hardy series. For example, one may study the series
$$\sum_{n=1}^\infty (-1)^n n^k(H_n - \log(n) - \gamma - \sigma_k(n)), \quad k\in\{0,1,\ldots\}$$
where $\sigma_k(n)$ is the correctional term that ensures the convergence of the series.


\end{document}